\renewcommand{\pod}[1]{\mathchoice
  {\allowbreak \if@display \mkern 18mu\else \mkern 8mu\fi (#1)}
  {\allowbreak \if@display \mkern 18mu\else \mkern 8mu\fi (#1)}
  {\mkern4mu(#1)}
  {\mkern4mu(#1)}
}
\DeclareMathAlphabet{\curly}{U}{rsfs}{m}{n}
\newtheorem{thm}{Theorem}[section]
\newtheorem{lem}[thm]{Lemma}
\newtheorem{prop}[thm]{Proposition}
\newtheorem*{theorem*}{Theorem}
\theoremstyle{remark}\newtheorem*{remark}{Remark}
\newcommand{\justif}[2]{&{#1}&\text{#2}}
\newcommand{\Nm}{\mathrm{Nm}}
\newcommand{\Tr}{\mathrm{Tr}}
\newcommand{\Oo}{\mathcal{O}}
\newcommand{\pp}{\mathfrak{p}}
\newcommand\sumprime{\sideset{}{^{'}}{\sum}}
\newlist{owndesc}{description}{1}  
\setlist[owndesc]{leftmargin=0.5cm,labelsep=2cm} 
\newcommand{\leg}[2]{\genfrac{(}{)}{}{}{#1}{#2}}
\newcommand\Li{\mathrm{Li}}
\newcommand\F{\mathbb{F}}
\newcommand\Z{\mathbb{Z}}
\newcommand\Q{\mathbb{Q}}
\newcommand\lcm{\mathrm{lcm}}
\newcommand\Ll{\curly{L}}
\renewcommand{\phi}{\varphi}
\newcommand{\mm}{\mathfrak{m}}
\newcommand{\nn}{\mathfrak{n}}
\newcommand{\ff}{\mathfrak{f}}
\renewcommand{\aa}{\mathfrak{a}}
\newcommand{\bb}{\mathfrak{b}}
\newcommand{\Lcal}{\curly{L}}
\newcommand{\Cc}{\curly{C}}
\newcommand{\Ss}{\curly{S}}
\newcommand{\qq}{\mathfrak{q}}
\begin{document}
\author{Tristan Freiberg}
\address{Department of Mathematics\\ University of Missouri\\Columbia, MO 65211, USA}
\email{freibergt@missouri.edu}

\author{Paul Pollack}
\address{Department of Mathematics\\ University of Georgia\\Athens, GA 30602, USA}
\email{pollack@uga.edu}

\title[The first invariant factor for reductions of CM elliptic curves]{The average of the first invariant factor for reductions of CM elliptic curves mod $p$}
\subjclass[2010]{Primary: 11G05, Secondary: 11N36}
\begin{abstract}\noindent Let $E/\Q$ be a fixed elliptic curve. For each prime $p$ of good reduction, write $E(\F_p) \cong \Z/d_p \Z \oplus \Z/e_p \Z$, where $d_p \mid e_p$. Kowalski proposed investigating the average value of $d_p$ as $p$ runs over the rational primes. For CM curves, he showed that $x\log\log{x}/\log{x} \ll \sum_{p \le x} d_p \ll x\sqrt{\log{x}}$. It was shown recently by Felix and Murty that in fact $\sum_{p \le x} d_p$ exceeds any constant multiple of $x\log\log{x}/\log{x}$, once $x$ is sufficiently large. In the opposite direction, Kim has shown that the expression $x\sqrt{\log{x}}$ in the upper bound can be replaced by $x\log\log{x}$. In this paper, we obtain the correct order of magnitude for the sum: $\sum_{p \le x} d_p \asymp x$ for all large $x$.
\end{abstract}

\maketitle

\section{Introduction} Let $E/\Q$ be a fixed elliptic curve. For each rational prime $p$ of good reduction, there are uniquely defined natural numbers $d_p$ and $e_p$ with $E(\F_p) \cong \Z/d_p \Z \oplus \Z/e_p\Z$. From a statistical point of view, it is natural to inquire about the behavior of $d_p$ and $e_p$ as $p$ varies. This is all the more true given that $d_p$ and $e_p$ have arithmetic significance: $d_p$ is the largest integer prime to $p$ for which all of the $d$-torsion is rational over $\F_p$, and $e_p$ is the largest order of any element of $E(\F_p)$. Note that the sizes of $d_p$ and $e_p$ are closely intertwined, since $d_p e_p = \#E(\F_p) \in [(\sqrt{p}-1)^2, (\sqrt{p}+1)^2]$ by a celebrated theorem of Hasse.

For notational convenience, set $d_p=e_p=0$ when $E$ has bad reduction at $p$.

Responding to a suggestion of Silverman, Freiberg and Kurlberg \cite{FK14} investigated the average size of $e_p$. They showed that as $x\to\infty$, one has $\sum_{p\le x} e_p \sim c_E \Li(x^2)$ for a certain constant $c_E \in (0,1)$. Their result is unconditional if $E$ has CM and conditional on the Generalized Riemann Hypothesis otherwise.

It is a simple consequence of the prime number theorem that $\sum_{p\le x} p \sim \Li(x^2)$. Keeping in mind that $d_p e_p \sim p$, the result of Freiberg and Kurlberg suggests that $d_p$ is usually quite small. In fact, Duke \cite{duke03} has shown that for any function $\xi(p)\to\infty$, one has $d_p < \xi(p)$ for asymptotically 100\% of primes $p$. (Again, GRH is assumed here unless $E$ has CM.) Duke's result tells us about the normal size of $d_p$. What about the average size?

In fact, the problem of determining the average order of $d_p$ was proposed by Kowalski already in 2000. For reasons explained in \cite[\S3.2]{kowalski06}, it is natural to conjecture that as $x\to\infty$, $\sum_{p \le x} d_p$ is $\sim c_E' X$ when $E$ has CM and $\sim c_E' \Li(x)$ otherwise, where $c_E' > 0$. These conjectures remain open, even under GRH.

There has been only meager progress towards Kowalski's conjectures in the case when $E$ does not have complex multiplication. In what follows, we restrict our discussion to the CM case. There Kowalski showed that for large $x$,
\[ \frac{x \log\log{x}}{\log{x}} \ll \sum_{p \le x} d_p \ll x\sqrt{\log{x}}; \]
moreover, under GRH, the sum is $\gg x$. Unconditionally, Felix and Murty \cite{FM13} showed that for any $A$ and all sufficiently large $x$, we have $\sum_{p \le x} d_p > A x\log\log{x}/\log{x}$. In the opposite direction, Kim showed (among other things) that in the upper bound, the expression $x\sqrt{\log{x}}$ can be replaced by $x\log\log{x}$ \cite{kim14}.

In this paper, we establish the correct order of magnitude for the partial sums of $d_p$.
\begin{thm}\label{thm:main} Let $E/\Q$ be an elliptic curve with complex multiplication. Then
\[ \sum_{p \le x} d_p \ll x; \]
here the implied constant is absolute. Moreover, for $x > x_0(E)$,
\[ \sum_{p \le x} d_p \gg_{E} x. \]
\end{thm}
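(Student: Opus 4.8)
\emph{Overview of the intended approach.} Suppose $E$ has CM by an order in the imaginary quadratic field $K$; for simplicity I take the maximal order $\Oo_K$ (the thirteen possible orders are handled identically, the conductor entering only in $O(1)$ factors). For a prime $p$ of good reduction that splits in $K$ (the ordinary case) there is a canonical Frobenius $\pi_p\in\Oo_K$ with $\Nm\pi_p=p$, normalized so that $\pi_p\equiv1$ modulo the conductor $\ff$ of the associated Hecke character, and $E(\F_p)\cong\Oo_K/(\pi_p-1)$ as $\Oo_K$-modules. Reading off invariant factors, $d\mid d_p\iff d\Oo_K\mid(\pi_p-1)$, so $d_p=\delta(\pi_p-1)$, the largest rational integer dividing $\pi_p-1$ in $\Oo_K$. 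For $p>3$ supersingular (inert or ramified in $K$) one has $a_p=0$, so $\Frob_p$ can act trivially on $E[d]$ only if $d\mid2$; hence such $p$ contribute $\ll\pi(x)\ll x$ and we may restrict to ordinary $p$.

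\emph{Upper bound.} Since $d_p=\sum_{d\mid d_p}\phi(d)$ and $\delta(\pi_p-1)^2\mid\Nm(\pi_p-1)=\#E(\F_p)\le(\sqrt x+1)^2$,
\[
\sum_{\substack{p\le x\\\text{ordinary}}}d_p=\sum_{d\le\sqrt x+1}\phi(d)\,\#\bigl\{p\le x\ \text{ordinary}:\ \pi_p\equiv1\pmod{d\Oo_K}\bigr\},
\]
and I would split the $d$-sum at $x^{1/2-\varepsilon}$. When $d\le x^{1/2-\varepsilon}$, every $p$ counted has $\pi_p=1+d\gamma$ with $\gamma\in\Oo_K$, $|\gamma|\ll\sqrt x/d$, and $\Nm(1+d\gamma)=1+d\Tr\gamma+d^2\Nm\gamma=p$ prime; an upper-bound sieve applied to this binary quadratic form, over lattice points $\gamma$ in a disc, bounds the number of such $\gamma$ by $\ll\frac{x/d^2}{\log x}\cdot\frac d{\phi(d)}$, the singular series being $\ll_K d/\phi(d)$ and the remaining Euler factors converging by the prime ideal theorem for $K$. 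Multiplying by $\phi(d)$ and summing gives $\ll\frac x{\log x}\sum_{d\le\sqrt x}\frac1d\ll x$, with a constant depending only on $K$, hence absolute. When $x^{1/2-\varepsilon}<d\le\sqrt x+1$ one has $\Nm\gamma=\#E(\F_p)/d^2\ll x^{2\varepsilon}$, so $\gamma$ ranges over $\ll x^{2\varepsilon}$ lattice points; interchanging summation, $\sum_d\phi(d)(\cdots)\le\sum_{0<|\gamma|\ll x^{\varepsilon}}\sum_d d$, where the inner sum is over $d$ in a short interval with $Q_\gamma(d):=\Nm(1+d\gamma)$ prime. For $|\gamma|\le\tfrac12x^{\varepsilon}$ the interval has length $\gg x^{1/2-\varepsilon}$, so a sieve in $d$ makes the inner sum $\ll\frac x{|\gamma|^2\log x}\,\mathfrak S(Q_\gamma)$, and summing over $\gamma$ via the average bound $\sum_{|\gamma|\le R}\mathfrak S(Q_\gamma)\ll R^2$ contributes $\ll\frac x{\log x}\cdot\varepsilon\log x\ll x$; the thin residual range is dispatched by the trivial bound $\sum_d d\ll x/|\gamma|^2$, summed over $\ll x^{2\varepsilon}$ values of $\gamma$.

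\emph{Lower bound.} Here the constant may depend on $E$. By the same expansion it suffices to prove
$\sum_{d\le D}\phi(d)\,\#\{p\le x\ \text{ordinary}:\pi_p\equiv1\pmod{d\Oo_K}\}\gg_E x$ for a fixed power $D=x^{c}$. Since $\phi(d)/\phi_K(d)\asymp1/d$ on squarefree $d$ (with $\phi_K(d)=\#(\Oo_K/d\Oo_K)^\times$), the anticipated main term is $\gg\frac x{\log x}\sum_{d\le D}\frac1d\gg_c x$. Each inner count is the number of degree-one primes of $\Oo_K$ of norm $\le x$ in a fixed ray class modulo $\lcm(\ff,d\Oo_K)$, so a lower bound of the correct size, after summing over $d$, should follow from the Bombieri--Vinogradov theorem for the field $K$ once one shows that the moduli for which equidistribution fails badly contribute negligibly --- which I would do using a Brun--Titchmarsh bound over $\Oo_K$ to control $d$ near the top of the range, together with a dyadic decomposition, and disposing of a possible Siegel zero (which affects at most one family of moduli).

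The principal obstacle is the size of the modulus range. Effective Chebotarev for the division fields $\Q(E[d])$ --- abelian over $K$ --- is available only for $d$ up to roughly $\exp(c\sqrt{\log x})$, and that range yields only $\sum_{p\le x}d_p\gg x\log\log x/\log x$, recovering the Felix--Murty bound. Reaching $\gg x$ forces the modulus range out to a genuine power of $x$, where individual equidistribution is hopeless unconditionally; the delicate point is to arrange the average over $d$ so that the gain $\sum_{d\le x^c}1/d\asymp\log x$ is not swamped by the accumulated error over the many moduli, which is exactly what Bombieri--Vinogradov, applied carefully near the top of the range, is meant to supply.
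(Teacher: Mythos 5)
Your upper bound is essentially the paper's argument: expand $d_p=\sum_{d\mid d_p}\phi(d)$, handle small $d$ by a Brun--Titchmarsh-type bound for prime elements of $\Oo_K$ in a residue class mod $d$, and for large $d$ switch the roles of $d$ and $\gamma$, sieving the quadratic polynomial $\Nm(1+d\gamma)$ in the variable $d$ and then averaging the resulting singular series over lattice points $\gamma$. The choice of cut-off ($x^{1/2-\varepsilon}$ versus the paper's $x^{1/3}$) is immaterial, and the one step you assert without proof, the average bound $\sum_{0<|\gamma|\le R}\mathfrak{S}(Q_\gamma)\ll R^2$, is exactly the point at which the paper applies Cauchy--Schwarz together with a Halberstam--Richert mean-value estimate; it does go through. (Your local computation gives a singular-series factor closer to $(d/\phi(d))^2$ than $d/\phi(d)$, but this changes nothing.)

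The lower bound, however, has a genuine gap. As you note, each dyadic block $d\asymp D_0$ is expected to contribute $\asymp x/\log x$ to $\sum_d \phi(d)\,\#\{p\le x: d\mid d_p\}$, so reaching $\gg x$ requires positive contributions of the correct order from blocks with $D_0$ a genuine power of $x$. Bombieri--Vinogradov cannot supply this. For $D_0=x^{c}$ the expected main term per modulus is $\asymp x/(D_0^{2}\log x)$, so the total main term over the block is only $\asymp x/(D_0\log x)$, whereas a Bombieri--Vinogradov estimate bounds the sum of the error terms over the block by $x/(\log x)^{A}$ at best --- larger than the entire main term by a power of $x$ once $D_0\gg(\log x)^{A-1}$. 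Equivalently, after weighting by $\phi(d)\asymp d$, the error BV permits at the top of the range is $\gg D_0\,x/(\log x)^{A}\ggg x$. So averaging over moduli yields no lower bound at all in the relevant range; this is not a matter of applying BV ``carefully near the top,'' but a structural mismatch between a $\log$-power saving and classes containing only $\asymp x^{1-2c}/\log x$ primes each. Brun--Titchmarsh is an upper bound and cannot help either. What is needed --- and what the paper uses --- is a Linnik-type theorem: Weiss's lower bound for degree-one primes in ray classes of a number field, valid for \emph{every individual} modulus of norm up to a fixed power of $x$, with at most one exceptional (Siegel) modulus; that exceptional modulus is then avoided by restricting to $d$ coprime to a fixed prime dividing it, at the cost of a bounded factor in $\sum\phi(d)/d^{2}$. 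Without an input of Linnik strength (log-free zero-density estimates and Deuring--Heilbronn repulsion), your plan stalls at moduli of size $\exp(O(\sqrt{\log x}))$ and produces only $o(x)$.
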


If one replaces $d_p$ with $d_p^{\alpha}$, for a fixed $\alpha \in (0,1)$, then Felix and Murty (op.\ cit.) have exhibited an asymptotic formula for the partial sums, conditional on GRH. The main term in their formula has the shape $c_{\alpha, E} \Li(x)$, so that $d_p^{\alpha}$ is bounded on average. Hence $\alpha=1$ is a transition point, since, by our main theorem, $d_p$ itself is $\asymp \log{x}$ on average over $p \le x$.
%

The proofs of the upper and lower bounds are based on distinct principles. Hence, the upper bound is treated in \S\ref{sec:upper} while the lower bound is treated separately in \S\S \ref{sec:lower1} and \ref{sec:lower2}.

\subsection*{Notation} We use the letter $K$ for an algebraic number field. We let $d_K$ denote the absolute value of the (absolute) discriminant of $K$. $\Oo_K$ denotes the the ring of integers of $K$. For $\alpha \in K$, we write $\Nm(\alpha)$ for the norm of $\alpha$ and $\Tr(\alpha)$ for its trace. For an ideal $\aa$ of $\Oo_K$, we let $\Nm(\aa):= \#\Oo_K/\aa$ and we let $\Phi(\aa):= \#(\Oo_K/\aa)^{\times}$. For $\alpha \in \Oo_K$, we let $\Phi(\alpha)$ be the $\Phi$ function applied to the principal ideal $(\alpha)$.

The case when $K$ is an imaginary quadratic extension of $\Q$ plays a special role for us. If $K= \Q(\sqrt{g})$ where $g < 0$ is squarefree, then we set $\omega = \sqrt{g}$ if $g \equiv 2\text{ or }3\pmod{4}$ and $\omega = \frac{1+\sqrt{g}}{2}$ otherwise. Thus, $1, \omega$ form an integral basis of $K$. We use the symbol $\Oo$ to denote a possibly nonmaximal order of $K$.

The letters $\ell$ and $p$ are reserved for rational primes. We use $\mathfrak{p}$ for a maximal ideal of $\Oo_K$. If $\mathfrak{p}$ lies over the rational prime $p$, then $\deg(\pp)$ denotes the degree of $\Oo_K/\pp$ over $\Z/p\Z$.

\section{The upper bound in Theorem \ref{thm:main}}\label{sec:upper}
\subsection{Preliminaries}

We begin by recording an important alternative description of $d_p$ in the case when $p$ is of good ordinary reduction. Let us fix notation. Suppose that $E/\Q$ is an elliptic curve with complex multiplication by an order $\Oo$ in the imaginary quadratic field $K$. Since $E$ is defined over $\Q$, the field $K$ is one of the nine imaginary quadratic fields of class number $1$, and $\Oo$ is one of the thirteen imaginary quadratic orders of class number $1$. (See \cite[p.\ 483]{silverman94} for a list of these orders along with the corresponding curves.) A rational prime $p$ of good reduction is an ordinary prime if and only if $p$ splits completely in $K$; in that case, as long as $p$ does not divide the conductor of $\Oo$, we can identify $\Oo$ with the ring of endormorphisms of the reduced curve $E$ mod $p$. (For these last two statements, see \cite[Theorem 12, p.\ 182]{lang87}.) Since our orders $\Oo$ all have conductor at most $3$, we can make this identification whenever $p > 3$.

\begin{lem}\label{lem:dpformula} Let $p > 3$ be a prime at which $E$ has good ordinary reduction. Let $\pi_p \in \Oo$ be the Frobenius endormorphism of the reduced curve. Then $d$ divides $d_p$ if and only if $\pi_p \equiv 1\pmod{d}$ in $\Oo$.
\end{lem}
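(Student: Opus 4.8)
The plan is to translate the group-theoretic statement ``$d \mid d_p$'' into a statement about the Frobenius acting on torsion points, and then to recognize the latter as a congruence condition on $\pi_p$ inside the endomorphism ring $\Oo$. Recall that $d \mid d_p$ precisely when $E(\F_p)$ contains a subgroup isomorphic to $\Z/d\Z \oplus \Z/d\Z$, equivalently when the full group $E[d]$ of $d$-torsion points (of the reduced curve) is pointwise fixed by the $p$-power Frobenius $\Frob_p$. So the first step is to set up the identification $\Oo \cong \mathrm{End}(E \bmod p)$, valid here since $p > 3$ exceeds the conductor, and to recall that under this identification the geometric Frobenius corresponds to the element $\pi_p \in \Oo$, with $\#E(\F_p) = \Nm(1-\pi_p)$ and, more to the point, the action of $\Frob_p$ on any torsion module agreeing with the action of $\pi_p$ via the endomorphism structure.

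Next I would make precise how $\Oo$ acts on $E[d]$. For $p \nmid d$ (which holds since $p$ is ordinary and large, while the relevant $d$ divides $\#E(\F_p)$, coprime to $p$), the group $E[d]$ is a free $\Oo/d\Oo$-module of rank $1$: this is the standard fact that for an elliptic curve with CM by $\Oo$, the $d$-torsion is a rank-one module over $\Oo/d\Oo$ whenever $d$ is invertible, and it is exactly the place where the identification $\mathrm{End} = \Oo$ is used. Given this, an endomorphism $\alpha \in \Oo$ acts on $E[d]$ as multiplication by the image of $\alpha$ in $\Oo/d\Oo$ (using a generator to trivialize the module), and in particular $\alpha$ fixes $E[d]$ pointwise if and only if $\alpha \equiv 1 \pmod{d\Oo}$, i.e. $\alpha \equiv 1 \pmod d$ in $\Oo$. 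Applying this with $\alpha = \pi_p$ and combining with the first paragraph gives: $E[d] \subseteq E(\F_p) \iff \Frob_p$ fixes $E[d]$ pointwise $\iff \pi_p \equiv 1 \pmod d$, which is the claim.

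The one genuine subtlety — and the step I'd expect to take the most care — is justifying that $E[d]$ is free of rank one over $\Oo/d\Oo$, rather than merely a faithful module, especially when $\Oo$ is non-maximal (conductor $2$ or $3$) and $d$ shares a factor with the conductor. One clean way around this is to avoid the structure theory altogether: since $\Frob_p$ corresponds to $\pi_p$ and the identity corresponds to $1$, the endomorphism $\pi_p - 1$ corresponds to $\Frob_p - 1$, whose kernel on $E[d]$ (or on all torsion prime to $p$) is exactly $E(\F_p)$ restricted to $d$-torsion; then $d \mid d_p$ iff $E[d] \subseteq \ker(\pi_p - 1)$ iff $d \mid (\pi_p - 1)$ in the sense that $\frac{\pi_p-1}{d}$ is again an endomorphism, which — since $\mathrm{End}(E \bmod p) = \Oo$ — is precisely the statement $\pi_p \equiv 1 \pmod d$ in $\Oo$. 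I would present the argument in this form, citing \cite[Theorem 12, p.\ 182]{lang87} for the identification of the endomorphism ring and Hasse's theorem to ensure $p \nmid \#E(\F_p)$, so that the $d$'s in play are automatically prime to $p$ and the division of endomorphisms by $d$ makes sense on the $d$-torsion.
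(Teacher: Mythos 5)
Your argument for $d$ coprime to $p$ is sound: translating $E[d] \subseteq E(\F_p)$ into ``$\pi_p - 1$ factors through $[d]$ in $\mathrm{End}(E) = \Oo$'' via separability of $[d]$ is essentially an unfolding of the two Kowalski lemmas that the paper cites as black boxes, and your reformulation via division of isogenies is a clean way to sidestep the $\Oo/d\Oo$-module-structure subtlety you correctly flagged for non-maximal $\Oo$.

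There is, however, a genuine gap in how you dispose of the case $p \mid d$. You assert that Hasse's theorem gives $p \nmid \#E(\F_p)$, so that any $d$ dividing $\#E(\F_p)$ is automatically coprime to $p$. This is false: Hasse gives $\#E(\F_p) \in [(\sqrt p - 1)^2, (\sqrt p + 1)^2]$, an interval that contains $p$ itself, and $a_p = 1$ (hence $\#E(\F_p) = p$) really does occur for CM curves over $\Q$ --- for instance $E \colon y^2 = x^3 + 5$ has $\#E(\F_7) = 7$. Also, the lemma is a biconditional for \emph{every} positive integer $d$, and in the direction starting from $\pi_p \equiv 1 \pmod d$ you have no a priori reason to assume $p \nmid d$. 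What Hasse actually yields is the weaker but correct fact $d_p \le \sqrt p + 1 < p$, i.e.\ $p \nmid d_p$. To finish the lemma when $p \mid d$ you still need to show that \emph{neither} side of the equivalence holds: $p \nmid d_p$ gives $d \nmid d_p$, and if $\pi_p \equiv 1 \pmod d$ then $p^2 \mid d^2 \mid \Nm(\pi_p - 1) = \#E(\F_p) \le (\sqrt p + 1)^2 < p^2$, a contradiction. The paper supplies exactly this short argument; your proof needs it too, and the appeal to Hasse for ``$p \nmid \#E(\F_p)$'' should be replaced by the appeal to Hasse for ``$p \nmid d_p$.''
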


\begin{proof} For integers $d$ coprime to $p$, we have
\begin{align*} d \mid d_p &\Longleftrightarrow E[d](\overline{\F_p}) \subset E(\F_p)  \justif{\quad}{(see \cite[Lemma 2.3(i)]{kowalski06})}\\ &\Longleftrightarrow \pi_p \equiv 1\pmod{d} \text{ in $\Oo$}  \justif{\quad}{(see \cite[Lemma 2.6]{kowalski06})}.\end{align*}
Now suppose that $p \mid d$. We will show that we have neither $d \mid d_p$ nor $\pi_p\equiv 1\pmod{d}$. Since $d_p^2 \mid \#E(\F_p)$ and $\#E(\F_p) \le (\sqrt{p}+1)^2$, we have $d_p \leq \sqrt{p}+1 < p$. Hence, $d_p$ is not a multiple of $p$ and so not a multiple of $d$. Since $\#E(\F_p) = \Nm(\pi_p-1)$, if $\pi_p \equiv 1\pmod{d}$, then $p^2 \mid d^2 \mid \#E(\F_p)$. This leads to the absurd inequality $p^2 \le \#E(\F_p) \le (\sqrt{p}+1)^2$.
\end{proof}

We also require two items from the analytic toolchest. The first is a Brun--Titchmarsh inequality for imaginary quadratic fields. This appears as \cite[Lemma 2.5]{pollack14}, where it is deduced from a Brun--Titchmarsh theorem for prime ideals established by Hinz and Lodemann \cite[Theorem 4]{HL94}. Let
\[ \pi(x;\mu,\alpha) := \#\{\text{prime elements }\pi: \Nm(\pi) \le x, \pi \equiv \alpha\pmod{\mu}\}. \]

\begin{lem}\label{lem:BT} Let $x\ge 3$. Suppose that $\mu, \alpha \in \Oo_K$ generate comaximal ideals. If $\Nm(\mu) < x$, then
\[ \pi(x;\mu,\alpha)\ll \frac{x}{\Phi(\mu) \log\frac{x}{\Nm(\mu)}}. \]
The implied constant may depend on $K$.
\end{lem}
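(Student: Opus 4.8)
The idea is to convert the count of \emph{prime elements} in a residue class into a count of \emph{prime ideals} in a ray class, and then quote the Brun--Titchmarsh theorem for prime ideals of Hinz and Lodemann \cite[Theorem~4]{HL94} directly; this is the route taken in \cite[Lemma~2.5]{pollack14}. Write $\mm := (\mu)$, let $w_K := \#\Oo_K^{\times} \in \{2,4,6\}$, and let $U \subseteq (\Oo_K/\mm)^{\times}$ be the image of $\Oo_K^{\times}$. A preliminary remark: every prime element $\pi$ counted in $\pi(x;\mu,\alpha)$ is automatically coprime to $\mu$, since if the prime ideal $(\pi)$ divided $(\mu)$ then $\pi \in (\pi)$ together with $\pi - \alpha \in (\mu) \subseteq (\pi)$ would force $\alpha \in (\pi)$, contradicting the comaximality of $(\mu)$ and $(\alpha)$.

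Next I would group the prime elements $\pi$ according to the prime ideal $(\pi)$ that they generate. Each of the fields $K$ occurring here has class number $1$, so every prime ideal $\pp$ coprime to $\mm$ is principal, with generators exactly $\{u\pi_{\pp}: u \in \Oo_K^{\times}\}$ for any fixed $\pi_{\pp}$; in particular at most $w_K$ prime elements generate a given $\pp$. Therefore
\[
\pi(x;\mu,\alpha) \;\le\; w_K \cdot \#\bigl\{\pp \text{ prime}: \Nm(\pp) \le x,\ u\pi_{\pp} \equiv \alpha \pmod{\mu}\text{ for some }u \in \Oo_K^{\times}\bigr\}.
\]
For a principal prime ideal $\pp$ coprime to $\mm$, the condition ``$u\pi_{\pp} \equiv \alpha \pmod{\mu}$ for some unit $u$'' depends only on the image of $\pp$ in the ray class group modulo $\mm$, which I write $\mathrm{Cl}_{\mm}$: under the canonical isomorphism $\mathrm{Cl}_{\mm} \cong (\Oo_K/\mm)^{\times}/U$ — valid because the class group of $K$ is trivial — it asserts precisely that the class of $\pp$ coincides with the class of $\alpha$. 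Hence the set on the right consists of prime ideals of norm at most $x$ lying in a single ray class modulo $\mm$.

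Now the Brun--Titchmarsh inequality for prime ideals in ray classes \cite[Theorem~4]{HL94} — this is the point at which the hypothesis $\Nm(\mu) < x$ is used — bounds the number of such prime ideals by $\ll_K x/\bigl(h_{\mm}\log(x/\Nm(\mu))\bigr)$, where $h_{\mm} := \#\mathrm{Cl}_{\mm}$. Since $h_{\mm} = \#\bigl((\Oo_K/\mm)^{\times}/U\bigr) = \Phi(\mu)/\#U \ge \Phi(\mu)/w_K$, this is $\ll_K w_K\, x/\bigl(\Phi(\mu)\log(x/\Nm(\mu))\bigr)$; combining with the factor $w_K$ from the previous display and absorbing $w_K^2 \le 36$ into the ($K$-dependent) implied constant gives the lemma. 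The only delicate point is the bookkeeping in the reduction step — verifying that the element-level congruence $\pi \equiv \alpha \pmod{\mu}$ translates into a well-defined condition on the ideal $(\pi)$, independent of the choice of generator, and that no loss beyond the bounded factors $w_K$ and $\#U$ occurs. Beyond that, the proof is a direct invocation of \cite[Theorem~4]{HL94}.
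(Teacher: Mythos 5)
Your proof is correct and follows essentially the same route as the paper, which simply cites \cite[Lemma 2.5]{pollack14} (itself deduced from Hinz--Lodemann \cite[Theorem 4]{HL94}); you have merely written out the unit/ray-class bookkeeping that the citation conceals. The only small difference is that you invoke class number $1$ explicitly---as does \cite{pollack14}---whereas the paper's remark observes that the reduction works without any class number restriction; this is harmless here, since the CM fields that arise all have class number $1$.
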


\begin{remark} In the statement of \cite{pollack14}, $K$ is assumed to be of class number $1$. In fact, the proof indicated in \cite{pollack14} goes through without any restriction on the class number of $K$. Note that if we assume $K$ has bounded class number, then there are only finitely many possibilities for $K$, and so the implied constant of the lemma can be chosen uniformly.
\end{remark}

The following lemma, which is a weakened form of a theorem of Halberstam and Richert \cite{HR79} (compare with \cite[Theorem 3.2, p.\ 58]{SS94}), is a versatile upper bound result for mean values of multiplicative functions.

\begin{lem}\label{lem:HR} Let $\lambda_1, \lambda_2$ be positive constants with $\lambda_2 < 2$. Suppose that $g$ is a nonnegative-valued multiplicative function with $g(p^k) \le \lambda_1 \lambda_2^k$ for all primes $p$ and all positive integers $k$. Then
\[ \sum_{n \le x} g(n) \ll_{\lambda_1, \lambda_2} x \prod_{p \le x}\left(1-\frac{1}{p}\right)\left(1+\frac{g(p)}{p} + \frac{g(p^2)}{p^2} + \dots\right). \]
\end{lem}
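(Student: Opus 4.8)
The plan is to recognize this as the Halberstam--Richert upper bound for mean values of non-negative multiplicative functions, in a form weaker than \cite{HR79} (see \cite[Theorem 3.2, p.\ 58]{SS94}), and to deduce it from those results after checking their hypotheses; at the end I indicate how a self-contained proof by a Shapiro-type $\log$-weighting argument would run. The deduction uses only two consequences of the assumption $g(p^k)\le\lambda_1\lambda_2^k$. Taking $k=1$ shows $g(p)\le\lambda_1\lambda_2$ for every prime, so Chebyshev's estimate gives $\sum_{p\le y}g(p)\log p\ll_{\lambda_1,\lambda_2}y$ for $y\ge1$. And
\[ \sum_{p}\sum_{k\ge2}\frac{g(p^k)\log(p^k)}{p^k}\le\lambda_1\sum_p(\log p)\sum_{k\ge2}k\left(\frac{\lambda_2}{p}\right)^{\!k}\ll_{\lambda_1,\lambda_2}1, \]
the inner series converging because $\lambda_2<2$---a restriction felt only at $p=2$, since $\lambda_2/p<1$ automatically for $p\ge3$. (The same condition $\lambda_2<2$ ensures that each factor $1+g(p)/p+g(p^2)/p^2+\cdots$ is finite, so that the right-hand side of the Lemma is a meaningful quantity.) These are exactly the hypotheses required by the Halberstam--Richert theorem.

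Applying that theorem to $G(x):=\sum_{n\le x}g(n)$ gives a bound of the form
\[ G(x)\ll_{\lambda_1,\lambda_2}\frac{x}{\log x}\sum_{n\le x}\frac{g(n)}{n}\qquad(x\ge2). \]
To reach the form stated in the Lemma, observe that every $n\le x$ has all prime factors at most $x$, whence $\sum_{n\le x}g(n)/n\le\prod_{p\le x}\bigl(1+g(p)/p+g(p^2)/p^2+\cdots\bigr)$ since all terms are non-negative, and that Mertens' theorem yields $1/\log x\ll\prod_{p\le x}(1-1/p)$ for $x\ge2$. Combining these with the displayed bound gives $G(x)\ll_{\lambda_1,\lambda_2}x\prod_{p\le x}(1-1/p)\bigl(1+g(p)/p+g(p^2)/p^2+\cdots\bigr)$, as required; the range $1\le x<2$ is immediate, since there $G(x)=1$ while the right-hand side equals $x$.

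For a self-contained treatment one would reprove the first displayed bound by $\log$-weighting. Starting from $G(x)\log x=\sum_{n\le x}g(n)\log n+\sum_{n\le x}g(n)\log(x/n)$, expand $\log n=\sum_{d\mid n}\Lambda(d)$ in the first sum, retain only prime-power $d=p^k$, split each $n\le x$ according to the exact power of $p$ dividing it, and reindex to obtain
\[ \sum_{n\le x}g(n)\log n\le\sum_p(\log p)\sum_{b\ge1}b\,g(p^b)\,G(x/p^b). \]
One peels off the $b=1$ term; the terms with $b\ge2$ are bounded using the second consequence above together with the trivial estimate $G(t)\le t\sum_{n\le t}g(n)/n\ll_{\lambda_1,\lambda_2}t(\log t)^{O(1)}$ (from $g(p)=O(1)$ and Mertens). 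Writing $\sum_{n\le x}g(n)\log(x/n)=\int_1^x G(t)\,\diff t/t$ and feeding everything into a dyadic induction over the ranges $x/2<t\le x$ closes the self-improving inequality for $G$. The crux of any such argument, and the only place where the hypothesis $\lambda_2<2$ does genuine work, is precisely the control of the prime-power terms; the combinatorial reindexing and the final passage to the Euler product are routine.
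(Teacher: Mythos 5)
Your deduction is correct and is essentially the paper's own treatment: the paper gives no proof of Lemma \ref{lem:HR}, presenting it as a weakened form of the Halberstam--Richert theorem \cite{HR79} (compare \cite[Theorem 3.2, p.~58]{SS94}), which is exactly the route you take, with the hypothesis verification (Chebyshev for $\sum_{p\le y}g(p)\log p$, convergence of the prime-power tail using $\lambda_2<2$) and the passage from $\frac{x}{\log x}\sum_{n\le x}g(n)/n$ to the Euler-product form via Mertens filled in correctly. The concluding self-contained sketch is an optional extra and not needed for the claim.
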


\subsection{The proof proper}
We begin by discarding from $\sum_{p \le x} d_p$ all supersingular primes $p$. It is simple to show that for each supersingular prime, one has $d_p \le 2$, and so these terms contribute only $O(x/\log{x})$. (For details, see the proof of \cite[Corollary 6.2]{kowalski06}.)

Let $\sum_{p}^{'}$ denote a sum restricted to primes $p$ of good ordinary reduction. To prove the upper bound in Theorem \ref{thm:main}, it suffices to show that $\sum_{3 < p \le x}^{'} d_p \ll x$. Recall that $\phi(m) = \sum_{d \mid m} \phi(d)$ for every positive integer $m$. Since $d_p \le \sqrt{p}+1 \le 2\sqrt{x}$ for all $p \le x$,
\begin{align}\notag\sumprime_{3 < p \le x} d_p &= \sumprime_{3 < p \le x} \sum_{d \mid d_p} \phi(d)\\
 &=\sum_{d \le 2\sqrt{x}} \phi(d) \sumprime_{\substack{3< p \le x \\ d \mid d_p}} 1.  \label{eq:analogue} \end{align}
We first show that those $d \le x^{1/3}$ make a contribution to \eqref{eq:analogue} of size $O(x)$. This estimate is already implicit in the works of both Kowalski and Kim, but we include the argument for completeness.

For each prime $p$ counted in the inner sum of \eqref{eq:analogue}, the Frobenius element $\pi_p\in \Oo$ is a prime of $\Oo_K$ with $\pi_p \equiv 1\pmod{d}$ and $\Nm(\pi_p)=p$. So by Lemma \ref{lem:BT}, that sum is $\ll \frac{1}{\Phi(d)} \frac{x}{\log{x}}$ uniformly for $d < x^{1/3}$, and thus the right-hand side of \eqref{eq:analogue} is
\[ \ll \frac{x}{\log{x}} \sum_{d \le x^{1/3}} \frac{\phi(d)}{\Phi(d)}.\]
Writing $\Delta$ for the discriminant of $K$, we have
\[ \Phi(d) = d^{2} \prod_{\ell \mid d}\bigg(1-\frac{1}{\ell}\bigg)\bigg(1-\frac{\leg{\Delta}{\ell}}{\ell}\bigg) \ge \phi(d)^2 \]
for all $d$. Thus, \eqref{eq:analogue} is $\ll \frac{x}{\log{x}}\sum_{d \le x^{1/3}} \frac{1}{\phi(d)} \ll \frac{x}{\log{x}} \cdot \log{x} = x$, as claimed.

Handling those values of $d$ with $x^{1/3} < d \le 2 \sqrt{x}$ requires a different strategy. Let $I_j= (2^j x^{1/3}, 2^{j+1} x^{1/3}]$, where $j$ runs over all nonnegative integers with $2^j x^{1/3} < 2\sqrt{x}$. We consider the contribution to the right-hand side of \eqref{eq:analogue} from $d$ in each $I_j$.

Using Lemma \ref{lem:dpformula}, we see that
\begin{equation}\label{eq:toinvert} \sumprime_{\substack{3< p \le x \\ d \mid d_p}} 1 \le \sum_{\substack{X, Y \in \Z \\ \Nm((X+Y\omega)d+1) \le x\\
\text{ and prime}}} 1. \end{equation}
If $\Nm((X+Y\omega)d+1) \le x$, then $|(X+Y\omega)d| \le 1+\sqrt{x} \le 2\sqrt{x}$. Hence, assuming $d \in I_j$, we must have
\[ \Nm(X+Y\omega) \le 4x/d^2 \le 2^{2-2j} x^{1/3}. \] Moreover, if $\Nm((X+Y\omega)d+1)$ is prime, then $Y \ne 0$. Inserting \eqref{eq:toinvert} back into \eqref{eq:analogue} and reversing the order of summation reveals that the $d \in I_j$ contribute at most
\begin{multline} \sum_{\substack{X, Y \in \Z,~Y\ne 0 \\ \Nm(X+Y\omega) \le 2^{2-2j} x^{1/3}}} \sum_{\substack{d \in I_j \\ \Nm((X+Y\omega)d+1)\text{ prime}}} \phi(d) \\ \ll 2^j x^{1/3} \sum_{\substack{X, Y \in \Z,~ Y\ne 0 \\ \Nm((X+Y\omega)) \le 2^{2-2j} x^{1/3}}} \sum_{\substack{d \in I_j \\ \Nm((X+Y\omega)d+1)\text{ prime}}} 1 \label{eq:almostsieving}.\end{multline}

The remaining sum on $d$ can be estimated by Brun's sieve. For each $X, Y \in \Z$ with $\Nm(X+Y\omega) \le 2^{2-2j} x^{1/3}$ and $Y\ne 0$, put
\begin{align*} F(T) = \Nm(X+Y\omega) \cdot T^2 + \Tr(X+Y\omega)\cdot T + 1 \in \Z[T].
\end{align*}
(Of course, $F$ depends on $X$ and $Y$ but we suppress this.) Then $F$ is a quadratic polynomial with discriminant $Y^2 \Delta$, where as above $\Delta$ denotes the discriminant of $K$. The final sum on $d$ in \eqref{eq:almostsieving} counts the number of $d \in I_j$ for which $F(d)$ is prime. By the fundamental lemma of the sieve (see \cite[Theorem 2.2, p.\ 68]{HR74}), the number of these $d$ is
\[ \ll 2^j x^{1/3} \prod_{\ell \le x}\left(1-\frac{\rho(\ell)}{\ell}\right), \]
where $\rho(\ell)$ counts the number of roots of $F$ modulo $\ell$. Put $D = 2\cdot \Nm(X+Y\omega) \cdot |Y|$. For $\ell$ not dividing $D$, we have $\rho(\ell) = 1 + \leg{\Delta}{\ell}$. Consequently,
\[ \prod_{\ell \le x}\bigg(1-\frac{\rho(\ell)}{\ell}\bigg) \ll \bigg(\frac{D}{\phi(D)}\bigg)^2 \cdot \prod_{\ell \le x}\bigg(1-\frac{\leg{\Delta}{\ell}}{\ell}\bigg) \prod_{\ell \le x}\bigg(1-\frac{1}{\ell}\bigg). \]
The first right-hand product over $\ell$ is $O(1)$, since the product extended to infinity converges to $L(1,\leg{\Delta}{\cdot})^{-1}$. (Note that only finitely many values of $\Delta$ are possible, and so the $O$-constant is absolute.) The second product on $\ell$ is $\ll (\log{x})^{-1}$. Thus,
\[ \sum_{\substack{d \in I_j \\ \Nm((X+Y\omega)d+1)\text{ prime}}} 1 \ll \frac{2^j x^{1/3}}{\log{x}} \frac{D^2}{\phi(D)^2},  \]
and so the right-hand side of \eqref{eq:almostsieving} is
\begin{equation}\label{eq:Dineq} \ll \frac{2^{2j} x^{2/3}}{\log{x}} \sum_{\substack{X, Y \in \Z,~Y\ne 0 \\ \Nm(X+Y\omega) \le 2^{2-2j} X^{1/3}}} \frac{D^2}{\phi(D)^2}. \end{equation}
We now show that $\frac{D^2}{\phi(D)^2}$ is bounded on average over $X$ and $Y$. Notice that
\[\frac{D^2}{\phi(D)^2} \ll \frac{\Nm(X+Y\omega)^2}{\phi(\Nm(X+Y\omega))^2} \frac{Y^2}{\phi(|Y|)^2}.\]
Applying the Cauchy--Schwarz inequality, we deduce that the sum on $D$ in \eqref{eq:Dineq} is
\[ \ll\Bigg(\sum_{\substack{X, Y \in \Z,~Y\ne 0 \\ \Nm(X+Y\omega) \le 2^{2-2j} x^{1/3}}} \frac{\Nm(X+Y\omega)^4}{\phi(\Nm(X+Y\omega))^4}\Bigg)^{1/2} \Bigg(\sum_{\substack{X, Y \in \Z,~Y\ne 0 \\ \Nm(X+Y\omega) \le 2^{2-2j} x^{1/3}}} \frac{Y^4}{\phi(|Y|)^4}\Bigg)^{1/2}.\]

The second sum on $X$ and $Y$ is the easier of the two to handle. The conditions on $X$ and $Y$ imply that $|X|$ and $|Y|$ are both $O(2^{-j} x^{1/6})$. We now use the known estimate
\begin{equation}\label{eq:schur} \sum_{m \le t} \frac{m^4}{\phi(m)^4} \ll t \qquad\text{(for all $t\ge 0$)}\end{equation}
to deduce --- summing first on $Y$ and then on $X$ --- that this second  sum is $O(2^{-2j} x^{1/3})$. The estimate \eqref{eq:schur} could be proved by applying Lemma \ref{lem:HR}; we omit this, as we shall see a similar but slightly more intricate calculation momentarily. In fact, \eqref{eq:schur} is classical and a more general result was known already to Schur (see \cite[p.\ 214]{elliott79} for a discussion).

Turning to the first sum, we let $m = \Nm(X+Y\omega)$. Since $K$ has class number $1$, the number of $X, Y\in \Z$ with $\Nm(X+Y\omega)=m$ is given by
\[ r(m):= w \sum_{e \mid m} \leg{\Delta}{e}, \]
where $w$ is the number of roots of unity in $K$. (Cf. \cite[Theorem 148, p.\ 179]{hecke81}. Without using that $K$ has class number $1$, we could still conclude that $r(m)$ is an upper bound on the number of pairs $X,Y$, which would suffice below.) Put $r^{\ast}(m) = r(m)/w$ and note that $r^{\ast}$ is a multiplicative function taking only nonnegative values. We can bound the first sum on $X, Y$ by
\[ w\sum_{m \le 2^{2-2j} x^{1/3}} r^{\ast}(m) \frac{m^4}{\phi(m)^4}. \]
Since $r^{\ast}(m) \le \tau(m)$, it is easy to see that the hypotheses of Lemma \ref{lem:HR} are satisfied for $g(n):= r^{\ast}(n) \frac{n^4}{\phi(n)^4}$. Applying that lemma shows that the last displayed quantity is
\begin{equation}\label{eq:almostdone} \ll 2^{-2j} x^{1/3} \prod_{p \le 2^{2-2j} x^{1/3}} \bigg(1-\frac{1}{p}\bigg)\bigg(1 + \sum_{k=1}^{\infty}\frac{r^\ast(p^k) (p/\phi(p))^4}{p^k}\bigg). \end{equation}
Now $1 + \sum_{k=1}^{\infty}\frac{r^\ast(p^k) (p/\phi(p))^4}{p^k} = 1 + \frac{1}{p} + \frac{\leg{\Delta}{p}}{p} + O(1/p^2)$, so that
\[ \bigg(1-\frac{1}{p}\bigg)\bigg(1 + \sum_{k=1}^{\infty}\frac{r^\ast(p^k) (p/\phi(p))^4}{p^k}\bigg) = 1 + \frac{\leg{\Delta}{p}}{p} + O(1/p^2). \]
Since $\sum_{p} \leg{\Delta}{p}/p$ converges, we see now that the product in \eqref{eq:almostdone} is $\ll 1$, and so \eqref{eq:almostdone} itself is $O(2^{-2j} x^{1/3})$. Assembling the estimates of this paragraph and the last yields
\[ \sum_{\substack{X, Y \in \Z,~Y\ne 0 \\ \Nm(X+Y\omega) \le 2^{2-2j} X^{1/3}}} \frac{D^2}{\phi(D)^2} \ll 2^{-2j} x^{1/3}. \]
Now from \eqref{eq:Dineq}, we see that the right-hand side of \eqref{eq:almostsieving} is $O(x/\log{x})$.

It remains to sum this upper bound over the possible values of $j$. There are only $O(\log{x})$ of these, leading to a final upper bound of $O(x)$, as desired.

\section{Technical preliminaries for the proof of the lower bound}\label{sec:lower1}

The proof of the lower bound half of Theorem \ref{thm:main} requires us to recall certain results from the literature on the equidistribution of primes in ray class groups. Our main reference for this material is the paper of Weiss \cite{weiss83}, where Linnik's fundamental result on the least prime in a progression is generalized to arbitrary algebraic number fields.

\subsection{Background and notation} Let $K$ be an algebraic number field. (We do not assume to begin with that $K$ is imaginary quadratic, though in our application this will be the case.) Suppose $[K:\Q]=n = r_1 + 2r_2$, where $r_1$ is the number of real embeddings of $K$ and $r_2$ the number of pairs of complex conjugate embeddings.  If $\mm$ is a (nonzero) ideal of $\Oo_K$, let $I(\mm)$ denote the group of fractional ideals relatively prime to $\mm$, and let $P_{\mm}$ be the subgroup defined by
\[ P_{\mm} := \{\alpha \Oo_K: \alpha \in K^{\times}, \alpha \text{ totally positive}, \alpha\equiv 1~\mathrm{mod}^{*}~m\}. \]
The \emph{narrow class group} mod $\mm$ is the quotient $I(\mm)/P_{\mm}$. We say $\aa, \bb \in I(\mm)$ are \emph{strictly equivalent} modulo $\mm$, and write $\aa \sim \bb\pmod{\mm}$, if $\aa$ and $\bb$ represent the same coset modulo $P_{\mm}$. A \emph{(Dirichlet) character modulo $\mm$} is a character of the finite abelian group $I(\mm)/P_{\mm}$. By a \emph{congruence class group} mod $\mm$, we mean a subgroup $H$ of $I(\mm)$ containing $P_{\mm}$.

Whenever $\mm \mid \nn$, there is a canonical surjection $I(\nn)/P_\nn \twoheadrightarrow I(\mm)/P_{\mm}$. Composing with a character $\chi$ mod $\mm$ yields a character $\chi'$ mod $\nn$. We say $\chi$ \emph{induces} $\chi'$. Similarly, if $H$ is a congruence class group mod $\mm$, taking the preimage of $H$ under the specified surjection yields an \emph{induced subgroup} $H'$ mod $\nn$. The \emph{conductor of $\chi$}, denoted $\ff_{\chi}$, is the smallest modulus (with respect to the partial order by divisibility) from which $\chi$ can be induced. We similarly define the conductor $\ff_{H}$ of a congruence class group $H$. One can show that
\[ \ff_{H} = \lcm\{\ff_{\chi}: \chi(H)=1\}. \]
For each character $\chi$, we set \[ d_{\chi} := d_K \cdot \Nm(\ff_{\chi}). \] For each congruence class group $H$, we write
\[ h_{H} := \#I(\mm)/H \quad\text{and}\quad d(H) := \max\{d_{\chi}: \chi(H)=1\}. \]

Let $\chi$ be a character modulo $\mm$. For $\sigma:=\Re(s) > 1$, we define the $L$-series $L(s,\chi) = \sum_{\aa} \chi(\aa) \cdot \Nm(\aa)^{-s}$.  Then $L(s,\chi)$ has an analytic continuation to the entire complex plane, except for a simple pole at $s=1$ when $\chi$ is principal. The \emph{nontrivial zeros} of $L(s,\chi)$ are those zeros belonging to the strip $0 < \sigma < 1$.

\subsection{A theorem of Weiss} The goal of this section is to describe a variant of  Weiss's theorem. In the following results, $c_1, c_2, \dots$ denote absolute positive constants. For the reader's convenience, we have used the same numbering as in Weiss's paper.

\begin{prop}\label{prop:except0} For $Q\ge 1$ and $T\ge 1$, put $\Lcal = \log(QT^n)$. Suppose that $\Lcal$ exceeds a certain absolute constant. There is at most one primitive character $\chi$ with $d_{\chi} \le Q$ for which $L(s,\chi)$ has a zero $\sigma+it$ with
\[ \sigma \ge 1-c_1 \Lcal^{-1} \quad\text{and}\quad |t| \le T. \]
\end{prop}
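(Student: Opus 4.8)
The plan is to run the classical Landau--Page repulsion argument for exceptional zeros, now for Hecke $L$-functions of $K$; all of the number-field analytic input would be quoted from Weiss \cite{weiss83}. Two tools drive it. First, \emph{positivity of logarithmic derivatives}: for $\Re(s)>1$ one has $-\frac{L'}{L}(s,\chi)=\sum_{\aa}\Lambda_K(\aa)\chi(\aa)\Nm(\aa)^{-s}$ with $\Lambda_K\ge 0$ supported on prime powers, so any combination of such series (together with $-\zeta_K'/\zeta_K$) with nonnegative coefficient-weights is again a nonnegative Dirichlet series; I would use $1+\chi(\aa)\ge 0$ for real $\chi$, $(1+\chi_1(\aa))(1+\chi_2(\aa))\ge 0$ for real $\chi_1,\chi_2$, and the classical $3+4\cos\theta+\cos2\theta=2(1+\cos\theta)^2\ge 0$ applied with $\theta=\arg\bigl(\chi(\aa)\Nm(\aa)^{-i\gamma}\bigr)$. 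Second, the \emph{Hadamard-factorization estimate}, uniform in conductor and height: for $1<\sigma\le 2$, $t\in\R$, and primitive $\chi$,
\[ -\Re\frac{L'}{L}(\sigma+it,\chi)\;\le\; c\log\!\bigl(d_\chi(|t|+2)\bigr)\;-\;\sum_{\rho}\Re\frac1{\sigma+it-\rho}\;+\;\delta_\chi\,\Re\frac1{\sigma+it-1}, \]
where $\delta_\chi=1$ for $\chi$ principal and $0$ otherwise, the sum is over the nontrivial zeros, and each term $\Re\frac1{\sigma+it-\rho}\ge 0$ since $\sigma>1\ge\Re\rho$. Since $T\ge 1$ and $\Lcal=\log(QT^n)$, for $d_\chi\le Q$ and $|t|\le T$ the ``log analytic conductor'' is $\ll\Lcal$; I absorb all such terms into one $C\Lcal$ with $C$ absolute.

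The skeleton: suppose distinct primitive $\chi_1\ne\chi_2$ with $d_{\chi_i}\le Q$ each have a zero $\rho_i=\beta_i+i\gamma_i$ with $\beta_i\ge 1-c_1\Lcal^{-1}$ and $|\gamma_i|\le T$, and derive a contradiction once $c_1$ is a small enough absolute constant. First, \emph{no complex character has such a zero}: if $\chi$ is complex then $\chi^2$ is nonprincipal with $d_{\chi^2}\ll d_\chi^2\le Q^2$, so applying the $3+4\cos+\cos2$ combination at heights $0,\gamma,2\gamma$ to $\zeta_K(s),L(s,\chi),L(s,\chi^2)$ and retaining only the zero $\rho$ of $L(s,\chi)$ (note $\sigma+i\gamma-\rho=\sigma-\beta>0$) gives, with $\sigma=1+\lambda\Lcal^{-1}$,
\[ 0\;\le\;\frac3{\sigma-1}-\frac4{\sigma-\beta}+C\Lcal\;\le\;\Lcal\Bigl(\tfrac3\lambda-\tfrac4{\lambda+c_1}+C\Bigr), \]
which is negative on choosing $\lambda=4c_1$ with $c_1<\tfrac1{20C}$. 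Hence any exceptional character is real. Next, \emph{at most one real character has a real exceptional zero}: for real distinct $\chi_1,\chi_2$ with $\gamma_1=\gamma_2=0$, the character $\chi_1\chi_2$ is nonprincipal, and $(1+\chi_1)(1+\chi_2)\ge 0$ applied at real $\sigma=1+\lambda\Lcal^{-1}$ to $\zeta_K,\,L(\cdot,\chi_1),\,L(\cdot,\chi_2),\,L(\cdot,\chi_1\chi_2)$ gives
\[ 0\;\le\;\frac1{\sigma-1}-\frac1{\sigma-\beta_1}-\frac1{\sigma-\beta_2}+C\Lcal\;\le\;\Lcal\Bigl(\tfrac1\lambda-\tfrac2{\lambda+c_1}+C\Bigr), \]
negative for $\lambda=2c_1$ and $c_1<\tfrac1{6C}$.

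What remains is the possibility that a real character's only zero in the region is non-real. When $|\gamma_i|\lesssim\Lcal^{-1}$ this is harmless: real characters have conjugate-symmetric zeros, so keeping both $\rho_i$ and $\bar\rho_i$ in the estimate above recovers essentially the same inequality and the two-character argument still closes. The genuine obstacle — and the step I expect to be hardest — is the transitional range $\Lcal^{-1}\lesssim|\gamma|\lesssim 1$, where the pole of $\zeta_K$ at $s=1$ re-enters through the height-$2\gamma$ term and the crude bounds above no longer force a contradiction; controlling it requires Weiss's sharper apparatus — in effect a zero-counting/repulsion input bounding the number of zeros of the whole family in such a box — which is precisely why this section is routed through \cite{weiss83} rather than reproved. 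Granting that, one lowers $c_1$ past the finitely many thresholds above (and that of the transitional analysis) to finish; the absoluteness of $c_1$ is inherited from the absoluteness of the constants in the Hadamard estimate, which carries all conductor and field dependence in the $\log d_\chi$ term. The later passage to congruence class groups $H$ is then immediate from $d(H)=\max\{d_\chi:\chi(H)=1\}$ and $\ff_H=\lcm\{\ff_\chi:\chi(H)=1\}$.
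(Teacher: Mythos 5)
Note first that the paper contains no proof of this proposition: it is quoted directly from Weiss \cite[Theorem 1.9]{weiss83}, and the authors only modify statements downstream of it. So your sketch is really an attempted reconstruction of Weiss's argument, and its skeleton is the right one (and essentially his): positivity of $-L'/L$, a Hadamard-type bound with the conductor and archimedean data absorbed into $\Lcal$, the $3+4\cos\theta+\cos 2\theta$ step to rule out complex characters, and the $(1+\chi_1)(1+\chi_2)$ repulsion to show that at most one real character has a real zero in the region. The numerics in your two displayed steps check out, and since the proposition only asserts ``at most one primitive character,'' proving that any offender is real with a real zero, and that two real offenders repel, does suffice.

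The problem is the step you flag as ``the genuine obstacle.'' A real character with a non-real zero $\beta+i\gamma$ in the region does not require any zero-counting or family-repulsion input: it is closed by the standard two-case split. If $|\gamma|\ge\sigma-1$, run the $3$--$4$--$1$ inequality for $\zeta_K$, $L(\cdot,\chi)$, $L(\cdot,\chi^2)$ noting that now $\chi^2$ is principal, so the height-$2\gamma$ term contributes the damped pole $\Re\frac{1}{\sigma-1+2i\gamma}\le\frac{\sigma-1}{(\sigma-1)^2+4\gamma^2}\le\frac{1}{5(\sigma-1)}$, and since $3+\frac15<4$ the contradiction still closes for $c_1$ small. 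If $|\gamma|<\sigma-1$, your conjugate-pair argument applied at real $\sigma$ gives $\frac{2(\sigma-\beta)}{(\sigma-\beta)^2+\gamma^2}\ge\frac{1}{\sigma-\beta}$ and finishes exactly as in your small-$|\gamma|$ case. So the range $\Lcal^{-1}\lesssim|\gamma|\lesssim 1$ is not where the difficulty lies, and since you supply neither this classical fix nor a precise citation covering that case, the proposal as written is incomplete there. The point that genuinely justifies routing the proposition through Weiss is uniformity: the constant $c_1$ is claimed absolute, so the Hadamard/explicit-formula estimate (including the $\Gamma$-factor contributions, of size $O(n\log(|t|+2))$, which must be absorbed into $\Lcal=\log(QT^n)$ with absolutely bounded constants, independent of $n$ and $d_K$) has to be carried out uniformly over all number fields; you assert this in one sentence rather than establish it, and that uniform bookkeeping is the actual content of \cite[Theorem 1.9]{weiss83}.
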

\noindent For the proof, see \cite[Theorem 1.9]{weiss83}. If the character $\chi$ of the last proposition exists, it is called the \emph{exceptional character} with respect to $Q$ and $T$. Similarly, $\ff_{\chi}$ is called the \emph{exceptional modulus} and $\sigma+it$ is called the \emph{exceptional zero}.

The following result is a short interval variant of Linnik's theorem, for prime ideals.

\begin{thm}[Weiss]\label{thm:weiss} Let $H\bmod{\mm}$ be a congruence subgroup and let $\Cc$ be a coset of $I(\mm)/H$. Define
\[ \pi_\Cc(x,\delta) := \sum_{\substack{\pp \in \Cc \\ x (1-\delta) < \Nm(\pp) < x \\ \deg(\pp)=1}} 1. \] Suppose that $Q \ge 1$ and that
\begin{equation}\label{eq:deltaineq} 0 < \delta \le c_{10} h_H^{-\frac{1}{2n}} Q^{-\frac{1}{2n}}. \end{equation}
Let $\nn$ be the product of the primes dividing $\mm$ but not $\ff_H$, and suppose that
\begin{equation}\label{eq:xineq} x \ge \max\{(\log \Nm(\nn))^2, (30 n Q^{\frac{1}{2n}} \delta^{-1})^{c_{11} n}\}. \end{equation}
With $T = (4(2n+3) \delta^{-1})^{2}$, assume that the exceptional character corresponding to $Q$ and $T$ --- if it exists --- does not induce a character $\chi$ {\rm{mod}} $\mm$ having $\chi(H)=1$. Then
\[ \pi_\Cc(x,\delta) \gg n^{-1} \cdot\frac{\delta x}{h_H \log{x}}. \]
Here the implied constant is absolute.
\end{thm}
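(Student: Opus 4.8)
The plan is to run Linnik's method in the form developed by Weiss, reducing the statement to two analytic facts about the Hecke $L$-functions $L(s,\chi)$ attached to the characters $\chi$ of $I(\mm)/P_{\mm}$ that are trivial on $H$: the zero-free region of Proposition~\ref{prop:except0}, with its lone possible exceptional character, and Weiss's \emph{log-free} zero-density estimate for such $L$-functions. (The third classical Linnik ingredient, the Deuring--Heilbronn repulsion phenomenon, turns out not to be needed here, precisely because the hypothesis removes the exceptional character from our family; see the last paragraph.)

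Write $\Lambda_K$ for the von Mangoldt function of $\Oo_K$ and set $\psi_{\Cc}(y):=\sum_{\aa\in\Cc,\ \Nm(\aa)\le y}\Lambda_K(\aa)$. Orthogonality of characters on $I(\mm)/H$ gives
\[ \psi_{\Cc}(x)-\psi_{\Cc}\big(x(1-\delta)\big)\;=\;\frac{1}{h_H}\sum_{\chi(H)=1}\overline{\chi}(\Cc)\sum_{x(1-\delta)<\Nm(\aa)<x}\Lambda_K(\aa)\chi(\aa). \]
Prime powers and prime ideals of degree $\ge 2$ contribute only $O(\sqrt{x})$ to $\psi_{\Cc}$, negligibly against the main term below, and the hypothesis $x\ge(\log\Nm(\nn))^2$ absorbs the finitely many Euler factors at primes dividing $\mm$ but not $\ff_H$ by which each $L(s,\chi)$ differs from the $L$-function of its primitive inducing character $\chi^{*}$. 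So it suffices to estimate the inner character sums using the primitive $\chi^{*}$, and then to pass from $\psi_{\Cc}$ back to $\pi_{\Cc}(x,\delta)$ by partial summation. Rather than a sharp short-interval cutoff, I would insert a smooth weight $w$ supported in $[x(1-\delta),x]$, taken (following Linnik) as a bounded self-convolution power of a normalized indicator, so that its Mellin transform $\widetilde{w}$ satisfies $\widetilde{w}(1)\asymp\delta x$ and decays suitably in both $\Re(s)$ and $\Im(s)$; this is what licenses moving contours.

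Every $\chi$ with $\chi(H)=1$ has $d_{\chi^{*}}=d_{\chi}\le d(H)$, and we take $Q\ge d(H)$, so that Proposition~\ref{prop:except0} and the log-free density estimate apply with the parameters $Q$ and $T=(4(2n+3)\delta^{-1})^{2}$. The explicit formula then gives
\[ \sum_{\aa}\Lambda_K(\aa)\chi(\aa)\,w(\Nm(\aa))\;=\;\mathbf{1}_{\chi=\chi_0}\,\widetilde{w}(1)\;-\;\sum_{\rho}\widetilde{w}(\rho)\;+\;(\text{admissible error}), \]
with $\rho=\beta+i\gamma$ running over the nontrivial zeros of $L(s,\chi^{*})$. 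The term $\widetilde{w}(1)$ from the principal character produces, after division by $h_H$ and conversion to degree-one prime ideals, a main term of order exactly $\delta x/(h_H\log x)$ with an absolute constant; the factor $n^{-1}$ in the conclusion covers the degree-dependence of the conversion and of the zero counts. Thus everything comes down to showing
\[ \sum_{\chi(H)=1}\ \sum_{\rho}\ \big|\widetilde{w}(\rho)\big|\;=\;o\!\left(\frac{\delta x}{h_H}\right). \]

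This last bound is the heart of the matter and the step I expect to be the real obstacle; it is exactly where the quantitative hypotheses on $\delta$ and $x$ are spent. By hypothesis the exceptional character of Proposition~\ref{prop:except0} does not induce any $\chi$ with $\chi(H)=1$, so by the uniqueness asserted there \emph{none} of our primitive characters $\chi^{*}$ has a zero with $\beta\ge 1-c_1\Lcal^{-1}$, $|\gamma|\le T$, where $\Lcal=\log(QT^{n})$; this is the ``easy case'' of Linnik's argument, in which the Deuring--Heilbronn phenomenon is not required. Zeros further inside the critical strip are controlled by the log-free density estimate, of the shape $\sum_{\chi}N_{\chi}(\alpha,T)\ll(QT^{n})^{c(1-\alpha)}$, where $N_{\chi}(\alpha,T)$ counts the zeros $\beta+i\gamma$ of the $L$-function of $\chi$ with $\beta\ge\alpha$ and $|\gamma|\le T$. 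Splitting the double sum dyadically according to the sizes of $1-\beta$ and of $|\gamma|$, using the decay $|\widetilde{w}(\rho)|\ll\delta x\cdot x^{-(1-\beta)}(1+|\gamma|\delta)^{-2}$ of the smooth weight, and combining the zero-free region with the density estimate, one reaches a bound of the form $\delta x\,h_H^{-1}\,x^{-c/\Lcal}(QT^{n})^{c'}$, plus a rapidly convergent tail. The constraint $\delta\le c_{10}(h_H Q)^{-1/(2n)}$ keeps $T^{n}\asymp\delta^{-2n}$ commensurate with $h_H Q$, and the constraint $x\ge(30nQ^{1/(2n)}\delta^{-1})^{c_{11}n}$ forces $\log x$ to exceed a large multiple of $\Lcal\cdot\log(h_H QT^{n})$, so that $x^{-c/\Lcal}(QT^{n})^{c'}=o(1)$. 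Hence $\psi_{\Cc}(x)-\psi_{\Cc}(x(1-\delta))\gg\delta x/h_H$, and partial summation yields $\pi_{\Cc}(x,\delta)\gg n^{-1}\delta x/(h_H\log x)$. The one genuinely delicate point is bookkeeping: carrying the dependence on $n$, $h_H$, $Q$, and $\delta$ through Weiss's zero-free region and density estimates so that the thresholds come out in exactly the stated form --- which is the reason one appeals to \cite{weiss83} rather than redoing the analysis from scratch.
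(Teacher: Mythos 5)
Your sketch is correct and matches the paper's approach: both run Weiss's version of Linnik's method (explicit formula, zero-free region, log-free zero-density estimate), with the exceptional-character hypothesis removing the need for the Deuring--Heilbronn step, exactly as you observe. The paper, however, presents the argument not as a from-scratch account but as a precise list of modifications to Weiss's proof of his Theorem~5.2; in doing so it fixes two small oversights in Weiss (the short interval should be $(x(1-\delta),x)$, not $(x,x(1+\delta))$, and a $\log(d(H)T^n)$ factor is dropped in one error estimate) and shows that, under the exceptional-character hypothesis, the sum over zeros is $O(\Delta_*\exp(-\tfrac12 c_1 c_{11}))$ and the sieve error is $O(\delta\log\tfrac1\delta\cdot\Delta_*)$, so that choosing $c_{10}$ small and $c_{11}$ large makes both error terms less than $\tfrac13$.

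Two bookkeeping slips worth flagging: since orthogonality on $I(\mm)/H$ already supplies the $1/h_H$, the correct target for the zero sum is $\sum_{\chi(H)=1}\sum_\rho|\widetilde w(\rho)|=o(\delta x)$, not $o(\delta x/h_H)$ --- the density estimate $\sum_\chi N_\chi(\alpha,T)\ll (QT^n)^{c(1-\alpha)}$ offers no $h_H^{-1}$ saving, so the $h_H^{-1}$ should likewise be absent from your intermediate bound $\delta x\,h_H^{-1}x^{-c/\Lcal}(QT^n)^{c'}$. Also, the hypothesis on $x$ forces only $\log x\gg c_{11}\Lcal$, linear (not quadratic) in $\Lcal$; this nevertheless suffices, because integrating the density estimate against $x^{-(1-\alpha)}$ over the zero-free region gives a saving of the form $\exp(-c\log x/\Lcal)\le\exp(-c\,c_{11})$, which is small once $c_{11}$ is large, with no residual $(QT^n)^{c'}$ factor to absorb.
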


\begin{proof} This follows from making small modifications in Weiss's proof of his Theorem 5.2 \cite{weiss83}. We now describe the necessary changes. We assume the reader has Weiss's paper in front of them for comparison.

\begin{owndesc}
\item[Variation in hypotheses] \hfill \\
In Weiss's version, $Q$ is immediately set equal to $d(H)$. Correspondingly, when Weiss states his assumptions on $x$ and $\delta$, he has $d(H)$ where we have $Q$. However, his arguments all go through under the hypothesis that $Q \ge d(H)$, if the conditions on $x$ and $\delta$ are stated as above.

\item[Variation in the definition of $\pi_{\Cc}(x,\delta)$]\hfill \\
In Weiss's statement, $\pi_\Cc(x,\delta)$ counts primes $\pp$ with $x < \Nm(\pp) < x(1+\delta)$, rather than $\pp$ satisfying $x (1-\delta) < \Nm(\pp) < x$. This appears to be a minor oversight, stemming from the incorrect claim at the bottom of p.\ 89 that $ye^{kA^{-1}} = xe^{\delta/2} \le x(1+\delta)$. In fact, the weights $H_k(y/\Nm(\pp))$ are only nonzero when $e^{-kA^{-1}} < \Nm(\pp)/y < e^{kA^{-1}}$ (by \cite[Lemma 3.2(a)]{weiss83}). Since
\[ x = y e^{kA^{-1}} \quad\text{and}\quad ye^{-kA^{-1}} = xe^{-\delta/2} > x(1-\delta), \] the counting function $\pi_{\Cc}(x,\delta)$ ought instead to be defined as above.

\item[Variation in the final lower bound on $\pi_{\Cc}(x,\delta)$]\hfill \\ Most significantly, the claimed lower bound on $\pi_\Cc(x,\delta)$ in \cite[Theorem 5.2]{weiss83} is quite a bit weaker than what we have asserted. This is because Weiss does not make any assumption on the (non)existence of exceptional characters.

To obtain the lower bound claimed in our Theorem \ref{thm:weiss}, we proceed as follows. From the first and last displayed equations on Weiss's p.\ 89,
\begin{multline*} 10n \frac{h_H \log{x}}{\delta x} \cdot \pi_{\Cc}(x,\delta) \ge 1 - \sum_{\chi(H)=1} \sum_{\rho_{\chi}} |h_k(\rho_{\chi}-1) y^{\rho_{\chi}-1}| \\ + O(h_H y^{c_6-1} \cdot T \log(d(H) T^n)) + O(h_H A^k T^{1-k} \cdot \log(d(H) T^n)). \end{multline*}
As argued at the top of p.\ 90, the second error term dominates if $c_{11}$ is chosen sufficiently large (as we may assume).

If the exceptional zero $\rho_{\ast}$ exists, then the argument at the top of p.\ 90 shows that the second error term is $O(\delta \Delta_{\ast})$, provided that $c_{10}$ is chosen small enough. Weiss claims that the same error estimate also holds when $\rho^{\ast}$ does not exist, but the reason given does not appear adequate. (A factor of $\log(d(H) T^n)$ appears to have been overlooked.) However, we can prove a negligibly weaker estimate as follows:
\begin{align*} \log(d(H) T^n) \le \log(Q T^n) &= \log{Q} + 2n \log{A} \\ &\ll \log{Q} + 2n\left(\log(2n) + \log \frac{1}{\delta}\right).  \end{align*}
From the argument at the top of p.\ 90 already alluded to,
\[ (2n)^{2n} Q \cdot h_H A^k T^{1-k} \le \delta. \]
Now $\log(Q) + 2n\log{(2n)} = \log((2n)^{2n} Q) < (2n)^{2n} Q$, and $2n \log\frac{1}{\delta} < (2n)^{2n} Q \log\frac{1}{\delta}$. Hence, $h_H A^k T^{1-k} \cdot \log(d(H) T^n) \ll \delta \log\frac{1}{\delta}$,
so that the second error term above is
\[ O(\delta \log\frac{1}{\delta} \cdot \Delta_{\ast}); \]
we use here that $\Delta_{\ast}$ is a positive constant when $\rho_{\ast}$ does not exist (see the definition of $\Delta_{\ast}$ at the bottom of p.\ 88). Hence, whether or not there is an exceptional zero,
\[ 10n \cdot \frac{h_{H} \log{x}}{\delta x} \cdot \pi_{\Cc}(x,\delta) \ge 1 - \sum_{\chi(H)=1} \sum_{\rho_{\chi}}|h_k(\rho_\chi-1) y^{\rho_{\chi}-1}| - O(\delta \log\frac{1}{\delta} \cdot \Delta_{\ast}). \]

We are assuming that either there is no exceptional zero or that the exceptional character $\chi$ does not satisfy $\chi(H)=1$.
The second paragraph on p.\ 90 shows that under this assumption, the double sum on $\chi$ and $\rho_{\chi}$ is $O(\Delta_{\ast} \exp(-c_1 \Ll^{-1}  \log{y}))$.  Moreover, earlier in the proof (see the very last statement of p.\ 88), it is pointed out that $y \ge \exp(\frac{1}{2} c_{11} \Ll)$. Thus,
$\exp(-c_1 \Ll^{-1} \log{y}) \le \exp(-\frac{1}{2} c_1 c_{11})$.
Inserting this above gives
\[ 10n \cdot \frac{h_H \log{x}}{\delta x} \cdot \pi_{\Cc}(x,\delta) \ge 1 - O(\Delta_{\ast} \exp(-\frac{1}{2} c_{1} c_{11}))- O(\delta \log\frac{1}{\delta}\cdot \Delta_{\ast}). \]
Now $\Delta_{\ast} \ll 1$. If we choose $c_{11}$ sufficiently large, then the first $O$-term will be smaller than $\frac13$ (say). If $c_{10}$ is chosen sufficiently small, then \eqref{eq:deltaineq} forces $\delta$ to be small, and so the second $O$-term will also be smaller than $\frac13$. Hence, $10n \cdot \frac{h_H \log{x}}{\delta x} > \frac{1}{3}$, yielding the theorem.\qedhere\end{owndesc}
\end{proof}

\subsection{A workhorse result} To proceed, we need to modify Theorem \ref{thm:weiss} ever so slightly. Let $\Ss$ be a finite set of nonzero ideals of $\Oo_K$. We can choose a small positive constant $c$ so that none of the finitely many $L$-functions $L(s,\chi)$, corresponding to characters $\chi$ mod $\mm$ with $\mm\in \Ss$, have a real zero $> 1-c$. If we replace $c_1$ with $c_1':=\min\{c,c_1\}$ in Proposition \ref{prop:except0}, it follows automatically that these $L(s,\chi)$ have no exceptional zeros (regardless of the choices of $Q$ and $T$). We call remaining exceptional zeros \emph{exceptional with respect to $Q, T$, and $\Ss$}.

The proof of Theorem \ref{thm:weiss} can now be run as before, but with ``exceptional zero corresponding to $Q$ and $T$'' replaced by ``exceptional zero with respect to $Q, T$, and $\Ss$''. This immediately gives an analogue of Theorem \ref{thm:weiss} that we will call Theorem \ref{thm:weiss}$'$. Note that changing $c_1$ to $c_1'$ has a trickle-down effect, so that in the statement of Theorem 3.2$'$ the  constants $c_{10}$ and $c_{11}$ are replaced by suitable constants $c_{10}'$ and $c_{11}'$ depending on $\Ss$.

We now formulate an important consequence of Theorem \ref{thm:weiss}$'$. For each $\aa \in I(\mm)$, let
\[ \pi(x; \mm, \aa) = \sum_{\substack{\Nm(\pp) \le x \\ \deg(\pp)=1 \\ \pp \sim \aa \pmod{\mm}}} 1. \]
In what follows, we write $h(\mm)$ for $\#I(\mm)/P_{\mm}$. This replaces our previous, more cumbersome notation $h_{P_\mm}$ for the same quantity.

\begin{thm}\label{thm:keythm} Let $K$ be a number field, and let $\Ss$ be a finite set of nonzero ideals of $\Oo_K$. Let $X \ge y^{C_1}$, where $y\ge 2$. Suppose $\Nm(\mm) \le y$ and that $\mm$ is not divisible by the exceptional modulus $\ff_{\chi}$ with respect to to $\Ss$, $Q:= d_K y$, and $T:= C_2 y^{1/n}$ (if it exists). Then
\[ \pi(X;\mm,\aa) \gg \frac{X}{h(\mm) \log{X}}. \]
Here the $C_i$ are positive constants depending on $K$ and $\Ss$, and the final implied constant can also depend on $K$ and $\Ss$.
\end{thm}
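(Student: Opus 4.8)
The plan is to deduce Theorem~\ref{thm:keythm} from Theorem~\ref{thm:weiss}$'$ by specializing the congruence class group to $H = P_\mm$ and then assembling the short-interval counts that Theorem~\ref{thm:weiss}$'$ provides into a bound up to $X$. Write $n = [K:\Q]$. With $H = P_\mm$ one has $h_H = h(\mm)$, and a coset $\Cc$ of $I(\mm)/H$ is exactly a strict ray class mod $\mm$; taking $\Cc$ to be the class of $\aa$, the function $\pi_\Cc(x,\delta)$ counts precisely the degree-one primes $\pp$ with $x(1-\delta) < \Nm(\pp) < x$ and $\pp \sim \aa \pmod\mm$. Two observations make the hypotheses of Theorem~\ref{thm:weiss}$'$ checkable. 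First, every Dirichlet character mod $\mm$ is by definition trivial on $P_\mm$, so the requirement that the exceptional character --- if it exists --- not induce a character $\chi$ mod $\mm$ with $\chi(H) = 1$ is exactly the assumption that the exceptional modulus not divide $\mm$, which is precisely what we have assumed. Second, $d(H) = \max_{\chi\bmod\mm} d_\chi \le d_K\Nm(\mm) \le d_K y = Q$, so Weiss's requirement $Q \ge d(H)$ holds; we shall also use the standard bound $h(\mm) \ll_K \Phi(\mm) \le \Nm(\mm) \le y$.

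Next I would fix the interval length. Set $\delta := c_{10}'\, h(\mm)^{-1/(2n)} Q^{-1/(2n)}$, the largest value allowed by \eqref{eq:deltaineq}; then $\delta \gg_{K,\Ss} y^{-1/n}$ and $\delta^{-1} \ll_{K,\Ss} y^{1/n}$, so the associated parameter $T_0 = (4(2n+3)\delta^{-1})^2$ of Theorem~\ref{thm:weiss}$'$ satisfies $c\, y^{1/n} \le T_0 \ll_{K,\Ss} y^{2/n}$ for some $c = c(K,\Ss) > 0$. Choosing $C_2 \le c$ one has $T_0 \ge C_2 y^{1/n}$, and since enlarging the $T$-parameter only restricts which characters can be exceptional --- the exceptional character, when it exists, being real, so that the window $|t| \le T$ plays no role --- the assumed avoidance of the exceptional modulus for $T = C_2 y^{1/n}$ yields the same for $T_0$. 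Finally, $\nn \mid \mm$ gives $\Nm(\nn) \le y$, while $(30 n Q^{1/(2n)}\delta^{-1})^{c_{11}' n} \le y^{C'}$ for a suitable $C' = C'(K,\Ss)$ and all $y \ge 2$; hence, with $C_1 := 2C' + 1$, condition \eqref{eq:xineq} holds at every $x \ge X^{1/2}$. (The hypothesis of Proposition~\ref{prop:except0} that $\log(QT^n)$ exceed an absolute constant can only fail for $y$ in a bounded range, where there are finitely many $\mm$ with $\Nm(\mm) \le y$ and the conclusion follows directly from the effective prime ideal theorem for strict ray classes, given the avoidance of the exceptional modulus.)

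Now put $x_j := X(1-\delta)^j$ and let $J$ be the largest index with $x_J \ge X^{1/2}$. For $0 \le j \le J$ we have $X^{1/2} \le x_j \le X$ and $X^{1/2} \ge y^{C_1/2} > y^{C'}$, so Theorem~\ref{thm:weiss}$'$ applies at each $x_j$ and gives $\pi_\Cc(x_j,\delta) \gg_{K,\Ss} \delta x_j/(h(\mm)\log X)$, where we have used $\log x_j \le \log X$ and absorbed the factor $n^{-1}$ into the constant. The open intervals $(x_{j+1}, x_j)$, $0 \le j \le J$, are pairwise disjoint and contained in $(0, X)$, so summing,
\[
\pi(X;\mm,\aa) \;\ge\; \sum_{j=0}^{J}\pi_\Cc(x_j,\delta) \;\gg_{K,\Ss}\; \frac{\delta}{h(\mm)\log X}\sum_{j=0}^{J} X(1-\delta)^j \;=\; \frac{X\bigl(1 - (1-\delta)^{J+1}\bigr)}{h(\mm)\log X}.
\]
By the maximality of $J$ we have $(1-\delta)^{J+1} < X^{-1/2}$, so $1 - (1-\delta)^{J+1} \ge \tfrac12$ (since $X \ge y^{C_1} \ge 4$), and the claimed lower bound $\pi(X;\mm,\aa) \gg_{K,\Ss} X/(h(\mm)\log X)$ follows.

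I expect the only genuinely delicate point to be the parameter bookkeeping of the second paragraph: one must verify that a single choice of $\delta$ (hence of $T$) validates all of Weiss's hypotheses simultaneously and uniformly over $\mm$ with $\Nm(\mm) \le y$ and over $x \in (X^{1/2}, X)$, and that the exceptional-character condition as recorded in Theorem~\ref{thm:keythm} really does imply the one produced by that choice of $T$. The remaining ingredients --- the specialization $H = P_\mm$, the geometric-series summation of the short-interval counts, and the treatment of bounded $y$ --- are routine.
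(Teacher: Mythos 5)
Your proof is correct and essentially mirrors the paper's: both specialize Theorem~\ref{thm:weiss}$'$ to $H = P_\mm$, verify its hypotheses from the bounds $h(\mm) \ll_K y$ and $d(H) \le d_K y$, and assemble the short-interval counts over a geometric progression of $x$-values into a lower bound for $\pi(X;\mm,\aa)$. The one substantive difference is in how you reconcile Weiss's parameter $T_0 = (4(2n+3)\delta^{-1})^2$ with the fixed $T = C_2 y^{1/n}$ appearing in the theorem's hypothesis. You take $\delta$ to be the maximum permitted by \eqref{eq:deltaineq}, which makes $T_0$ depend on $\mm$ and not just on $y$, and then transfer the exceptional-modulus assumption from $T$ to $T_0 \ge T$ by a monotonicity-in-$T$ observation: since the exceptional zero is real, only the constraint $\sigma \ge 1 - c_1 \Lcal^{-1}$ is operative, and it tightens as $T$ (hence $\Lcal$) grows, so an exceptional character for $T_0$ is a fortiori exceptional for $T$. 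The paper instead takes $\delta = C_4 y^{-1/(2n)}$, depending only on $y$, and sets $C_2 = 16(2n+3)^2 C_4^{-2}$ so that $T_0 = C_2 y^{1/n}$ identically, dispensing with the monotonicity step. Your choice makes \eqref{eq:deltaineq} hold by construction rather than by estimate; the paper's choice avoids leaning on the realness of the exceptional zero, a fact that is standard and underlies Weiss's Theorem 1.9 but is not explicitly recorded in Proposition~\ref{prop:except0} as stated here. Both routes are sound, and the remaining bookkeeping --- the inequality $h(\mm) \le h \cdot 2^{r_1} \Phi(\mm)$, verification of \eqref{eq:xineq}, and the final geometric-series summation of $\pi_\Cc(x_j,\delta)$ --- coincides in all essentials (you sum downward from $X$ to $X^{1/2}$ where the paper sums upward from $\tfrac12 y^{C_1}$ to $X$, but this is cosmetic).
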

\begin{proof} We apply Theorem \ref{thm:weiss}$'$ with $H = P_\mm$, with $\Cc$ the coset of $\aa$ modulo $P_\mm$, with $Q = d_K y$, and with $\delta = C_4 y^{-\frac{1}{2n}}$, for $C_4$ suitably small (to be specified momentarily). We will choose $C_2 = 16(2n+3)^2 C_4^{-2}$; then the exceptional zero hypothesis made in Theorem \ref{thm:keythm} corresponds exactly to that in Theorem \ref{thm:weiss}$'$, since $(4(2n+3) \delta^{-1})^2 =C_2 y^{1/n}$.

Let us check that hypotheses \eqref{eq:deltaineq} and \eqref{eq:xineq} of Theorem \ref{thm:keythm} are satisfied. It is classical (see, e.g., \cite[Proposition 2.1, p.\ 50]{childress09}) that
 \[ h(\mm) = \frac{h \cdot 2^{r_1} \cdot \Phi(\mm)}{[U: U_{\mm}^{+}]}. \]
Here $h$ is the class number of $K$, the group $U$ is the collection of units of $\Oo_K$, and $U_{\mm}^{+}$ is the subgroup of totally positive units congruent to $1$ modulo $\mm$. Thus,
\[ h(\mm) \le h \cdot 2^{r_1}  \Phi(\mm) \le h \cdot 2^{r_1} y.\]
(Recall our assumption that $\Nm(\mm) \le y$.) Also,
\[ d(H) \le d_K \cdot \Nm(\mm) \le d_K y. \]
The quantities $n$, $h$, $r_1$, and $d_K$ are determined by $K$.
So if $C_4$ is chosen suitably small, depending on the field $K$ and the value of $c_{10}'$, then
\[ C_4 y^{-\frac{1}{2n}} \le c_{10}' h(\mm)^{-\frac{1}{2n}} Q^{-\frac{1}{2n}}. \]
Thus, $\delta$ is in the desired range \eqref{eq:deltaineq}.
Turning to \eqref{eq:xineq}, notice that if $C_5$ is chosen sufficiently large in terms of $C_4$, $K$, and $c_{11}'$, then
\[ (30 n Q^{\frac{1}{2n}} \delta^{-1})^{c_{11}' n} \le C_5 y^{c_{11}'}. \]
If $C_1$ is chosen sufficiently large in terms of $C_5$ and $c_{11}'$, then
\[ C_5 y^{c_{11}'} \le \frac{1}{2}y^{C_1}. \]
We can assume that $C_1 \ge 2$, so that
\[ (\log \Nm(\nn))^{2} \le (\log \Nm(\mm))^2 \le (\log{y})^2 \le \frac{1}{2}y^{C_1}. \]
It follows that the hypothesis \eqref{eq:xineq} holds for any $x \ge \frac{1}{2}y^{C_1}$. So by Theorem \ref{thm:weiss}$'$,
\[ \pi_\Cc(x,\delta) \gg \frac{\delta x}{h(\mm) \log{x}}.\]
We have absorbed the factor of $n^{-1}$ into the implied constant, which we remind the reader is now allowed to depend on $K$.

We seek a lower bound on $\pi(X;\mm,\aa)$ rather than a lower bound on primes in short intervals. Thus, we add up the lower bounds on $\pi_{\Cc}(x,\delta)$ over an appropriate set of values of $x$. Let $x_0 = \frac{1}{2} y^{C_1}$, and let $x_j = (1-\delta)^{-j} x_0$. Choose $J$ as large as possible with $x_J \le X$. Then
\begin{multline*} \pi(x;\mm,\aa) \ge \sum_{j=0}^{J}\pi_{\Cc}(x_j, \delta) \gg \frac{\delta x_0}{h(\mm) \log{X}} \sum_{j=0}^{J} (1-\delta)^{-j} \\
 =\frac{\delta x_0}{h(\mm) \log{X}} \cdot \frac{(1-\delta)^{-(J+1)}-1}{(1-\delta)^{-1}-1} \gg \frac{x_0}{h(\mm) \log{x}} ((1-\delta)^{-(J+1)}-1).\end{multline*}
By the choice of $J$, we have
\[ x_0 ((1-\delta)^{-(J+1)}-1) = x_{J+1} - x_0 \ge X-x_0 \ge \frac{1}{2} X, \]
using our assumption that $X \ge y^{C_1}$. Thus, $\pi(X;\mm,\aa) \gg \frac{X}{h(\mm) \log{X}}$.
\end{proof}

\section{The lower bound in Theorem \ref{thm:main}}\label{sec:lower2} We let $E/\Q$ denote a fixed elliptic curve with complex multiplication. We will write $\sum'$ for a sum restricted to primes $p$  of good reduction. By an argument seen earlier,
\begin{equation}\label{eq:sumlower} \sum_{p \le x} d_p = \sum_{d \le 2\sqrt{x}}\phi(d) \sumprime_{\substack{p \le x \\ d \mid d_p}} 1. \end{equation}
Our strategy is to obtain a lower bound for the double sum by carefully estimating the inner sum from below for a sufficiently dense set of values of $d$.

To avoid technical complications, we only consider integers $d > 2$. The primes $p$ of good reduction for which $d$ divides $d_p$ are exactly those that split completely in $\Q(E[d])$ (see \cite[Lemma 2.7]{kowalski06}). Since $d > 2$, we know that $K(E[d]) = \Q(E[d])$ \cite[Lemma 6]{murty83}. Thus, $p$ splits completely in $\Q(E[d])$ if and only if $p$ splits completely in $K$ and the primes of $K$ lying above $p$ split completely in $K(E[d])$. We analyze the  $\pp$ that split completely in $K(E[d])$ by means of the following lemma.

\begin{lem}\label{lem:murty} There is an ideal $\mm$ of $\Oo_K$, depending only on $E$, with the following property: For each positive integer $d$, a prime $\pp$ not dividing $d\mm$ splits completely in $K(E[d])$ if and only if $\pp$ lies in one of $t(d)$ cosets modulo $P_{d\mm}$, where
\[ t(d)  = h(d\mm) \cdot [K(E[d]): K]^{-1}. \]
\end{lem}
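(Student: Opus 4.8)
The plan is to realize $K(E[d])$ as a subfield of a ray class field of $K$ and then count the relevant Frobenius conjugacy classes via class field theory. Since $E$ has CM by an order $\Oo$ in $K$, the theory of complex multiplication tells us that $K(E[d])$ is an abelian extension of $K$ contained in a ray class field of $K$ with conductor dividing $\ff \cdot (d)$ for a fixed conductor $\ff$ (the conductor attached to the Hecke character of $E/K$, which depends only on $E$); one also needs to throw in the primes dividing the conductor of $\Oo$, which are likewise bounded in terms of $E$. Taking $\mm$ to be this fixed ideal $\ff$ (suitably enlarged to absorb the conductor of $\Oo$ and any other primes of bad reduction), we get that $K(E[d]) \subseteq K_{d\mm}$, the ray class field of modulus $d\mm$, for every $d$.

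First I would fix $\mm$ depending only on $E$ as above, so that $K(E[d])/K$ is abelian and unramified outside $d\mm$, with $K(E[d]) \subseteq K_{d\mm}$. By the Artin reciprocity isomorphism, $\Gal(K_{d\mm}/K) \cong I(d\mm)/P_{d\mm}$, and the subextension $K(E[d])$ corresponds to a subgroup $H_d \supseteq P_{d\mm}$ of $I(d\mm)$ with $I(d\mm)/H_d \cong \Gal(K(E[d])/K)$, so that $[I(d\mm):H_d] = [K(E[d]):K]$. For a prime $\pp$ of $K$ not dividing $d\mm$, the Artin symbol $\left(\frac{K_{d\mm}/K}{\pp}\right)$ depends only on the strict ideal class of $\pp$ modulo $d\mm$, and $\pp$ splits completely in $K(E[d])$ precisely when this symbol lies in the subgroup fixing $K(E[d])$, i.e.\ precisely when the class of $\pp$ modulo $P_{d\mm}$ lies in $H_d$. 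Thus the primes $\pp \nmid d\mm$ that split completely in $K(E[d])$ are exactly those lying in the $H_d/P_{d\mm}$-many cosets of $P_{d\mm}$ making up $H_d$. The number of such cosets is
\[
[H_d : P_{d\mm}] = \frac{\#I(d\mm)/P_{d\mm}}{\#I(d\mm)/H_d} = \frac{h(d\mm)}{[K(E[d]):K]} =: t(d),
\]
which is the desired formula.

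The main obstacle — or rather the point requiring the most care — is pinning down that a single fixed ideal $\mm$, independent of $d$, works simultaneously for all $d$: one must invoke the main theorem of complex multiplication to see that the conductor of $K(E[d])/K$ divides a fixed ideal times $(d)$, rather than growing with $d$ in an uncontrolled way. This is exactly where \cite[Lemma 6]{murty83} (and the surrounding CM theory, e.g.\ as in \cite{lang87}) enters: it identifies $K(E[d])$ with the fixed field cut out by the $d$-division values of the Weber/Hecke character, whose conductor away from $d$ is bounded by data attached to $E$ alone. A secondary technical point is bookkeeping at the finitely many bad primes and the primes dividing the conductor of $\Oo$; these are all absorbed into $\mm$, and excluding $\pp \mid d\mm$ in the statement is precisely what lets us ignore them. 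Everything else is a formal translation through Artin reciprocity, so once $\mm$ is fixed the counting is immediate.
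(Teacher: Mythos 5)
Your proof is correct, and it reaches the same conclusion as the paper but by a somewhat different route. The paper cites \cite[Lemma 4]{murty83} as a black box for the existence of $\mm$ and for the structural fact that the split-completely condition is detected by membership in $t(d)$ cosets modulo $P_{d\mm}$; it then pins down the value of $t(d)$ \emph{analytically}, by comparing two density computations (equidistribution of prime ideals among ray classes on the one hand, Chebotarev for $K(E[d])/K$ on the other). You instead rederive Murty's lemma from CM theory directly — identifying $K(E[d])$ as a subfield of the ray class field $K_{d\mm}$ once $\mm$ is chosen to absorb the conductor of the Hecke character, the conductor of the order $\Oo$, and the bad primes — and then read off $t(d)$ \emph{algebraically} as the index $[H_d : P_{d\mm}]$, where $H_d$ is the subgroup of $I(d\mm)$ corresponding to $K(E[d])$ under Artin reciprocity. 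Your group-theoretic count is more direct and avoids the analytic input entirely (it is, in effect, the statement the paper's density argument is implicitly proving); the paper's route is shorter on the page because it leans on Murty's lemma for the hard part. Both are sound.
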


\begin{proof} Except for the formula for $t(d)$, this follows from \cite[Lemma 4]{murty83}. From the asymptotic equidistribution of prime ideals mod $P_{\mm}$ (see \cite[Corollary 4, p.\ 349]{narkiewicz04}), the density of $\pp$ splitting completely in $K(E[d])$ is $t(d)/h(d\mm)$. On the other hand, the Chebotarev density theorem implies that this density is also $[K(E[d]):K]^{-1}$. Comparing these two statements gives the stated formula.
\end{proof}

In the following arguments, implied constants may depend on $E$ unless otherwise stated.

Given $d$, we let $\aa_1, \dots, \aa_{t(d)}$ be elements of $I(\mm)$ representing the cosets modulo $P_{d\mm}$ appearing in Lemma \ref{lem:murty}. Piecing the above facts together, we deduce that when $d > 2$,
\[ \sumprime_{\substack{p \le x \\ d\mid d_p \\ p \nmid d\cdot \Nm(\mm)}} 1 = \frac{1}{2} \sumprime_{p \le x}\sum_{\substack{\pp \mid p \\ e(\pp/p) = f(\pp/p) = 1\\ \pp \sim \aa_i\pmod{d\mm}\text{ for some $i$}}}1 = \frac{1}{2} \sum_{i=1}^{t(d)} \pi(x;d\mm,\aa_i) + O(1).\]
Since only $O(\log(2d))$ primes divide $d\cdot \Nm(\mm)$, we conclude that
\begin{equation}\label{eq:summaryconclusion} \sumprime_{\substack{p \le x \\ d\mid d_p}} 1 = \frac{1}{2} \sum_{i=1}^{t(d)} \pi(x;d\mm, \aa_i) + O(\log(2d)). \end{equation}

We apply Theorem \ref{thm:keythm} with $K$ the CM field, $\Ss$ consisting solely of the ideal $\mm$ from Lemma \ref{lem:murty}, $X = x$, and $y = x^{1/C_1}$. If the exceptional modulus $\ff_{\chi}$ exists, then $\ff_{\chi} \nmid \mm$. Hence, there is a prime $\qq$ dividing $\ff_{\chi}$ to a higher power than to which it divides $\mm$. Let $q$ be the rational prime below $\qq$. We obtain a lower bound on $\sum_{p \le x} d_p$ by restricting the final sum on $d$ in \eqref{eq:sumlower} to values
\[ 2 < d \le x^{\frac{1}{2C_1}} \Nm(\mm)^{-1/2} =:Z, \quad\text{with}\quad \text{$d$ coprime to $q$}. \]
From \eqref{eq:summaryconclusion},
\[ \sum_{\substack{ 2 < d \le Z \\ \gcd(d,q)=1}} \phi(d) \sumprime_{\substack{p \le x \\ d\mid d_p}} 1 \\ = \frac{1}{2} \sum_{\substack{2 < d \le Z \\ \gcd(d,q)=1}} \phi(d) \sum_{i=1}^{t(d)} \pi(x;d\mm, \aa_i) + O(x^{1/C_1} \log{x}). \]
Now $C_1$ is a large constant. Hence, the error term is $o(x)$, and so to complete the proof of Theorem \ref{thm:main} it remains only to show that the main term is $\gg x$. For $d$ as above, the modulus $d\mm$ is not divisible by $\ff_{\chi}$, and $\Nm(d\mm) \le y$. By Theorem \ref{thm:keythm},
\[ \sum_{i=1}^{t(d)} \pi(x;d\mm, \aa_i) \gg \frac{t(d)}{h(d\mm)}  \frac{x}{\log{x}} = \frac{1}{[K(E[d]):K]} \frac{x}{\log{x}}. \]
Since $[K(E[d]):K]\ll d^2$, we conclude that
\begin{equation}\label{eq:almostdone2} \sum_{\substack{2 < d \le Z \\ \gcd(d,q)=1}} \phi(d)  \sum_{i=1}^{t(d)} \pi(x;d\mm, \aa_i) \gg \frac{x}{\log{x}} \sum_{\substack{2 < d \le Z \\ \gcd(d,q)=1}} \frac{\phi(d)}{d^2}.  \end{equation}

To show that the final sum on $d$ is $\gg \log{x}$ (for large $x$), we use the following simple observation.

\begin{lem}\label{lem:trivlem} Let $g$ be a multiplicative function taking only nonnegative values. For any positive integer $k$, and any real $t > 0$,
\[ \sum_{\substack{n \le t \\ \gcd(n,t)=1}} \mu^2(n) g(n) \ge \bigg(\prod_{p \mid k}(1+g(p))^{-1}\bigg) \bigg(\sum_{n \le t} \mu^2(n) g(n)\bigg). \]
\end{lem}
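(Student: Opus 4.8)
The plan is to exploit the multiplicativity of $g$ to factor each squarefree $n$ according to its ``$k$-part'' and ``prime-to-$k$ part.'' (I read the summation condition in the displayed inequality as $\gcd(n,k)=1$, matching the product $\prod_{p\mid k}$.) Concretely, every squarefree $n$ factors uniquely as $n=ab$, where $a$ is squarefree with every prime factor dividing $k$ (equivalently $a\mid\rad(k)$) and $b$ is squarefree with $\gcd(b,k)=1$; moreover $\gcd(a,b)=1$, so $\mu^2(n)=\mu^2(a)\mu^2(b)$ and $g(n)=g(a)g(b)$.

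The key steps, in order: First, split the unrestricted sum along this factorization,
\[ \sum_{n\le t}\mu^2(n)g(n) = \sum_{a\mid\rad(k)}\mu^2(a)g(a)\sum_{\substack{b\le t/a\\ \gcd(b,k)=1}}\mu^2(b)g(b). \]
Second, use that $g\ge 0$ (hence every term is nonnegative) together with $a\ge 1$, so that $t/a\le t$ and the inner sum only increases upon relaxing the constraint to $b\le t$:
\[ \sum_{n\le t}\mu^2(n)g(n) \le \Bigg(\sum_{a\mid\rad(k)}\mu^2(a)g(a)\Bigg)\Bigg(\sum_{\substack{b\le t\\ \gcd(b,k)=1}}\mu^2(b)g(b)\Bigg). \]
Third, evaluate the finite sum on $a$ in closed form: since it ranges over squarefree divisors of $\rad(k)$, it equals $\prod_{p\mid k}(1+g(p))$. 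Finally, divide through by this (positive) product to obtain exactly the asserted inequality.

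I expect there to be no real obstacle here; the only points requiring a word of care are the uniqueness of the factorization $n=ab$ (so that no squarefree $n$ is counted twice) and the invocation of nonnegativity of $g$ to discard the constraint $b\le t/a$ in favor of $b\le t$. Everything else is a direct computation, and the argument works verbatim for any nonnegative multiplicative $g$ and any $t>0$, $k\in\Z_{>0}$.
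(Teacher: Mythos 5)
Your proof is correct and follows essentially the same route as the paper: both factor each squarefree $n$ as a product of a squarefree divisor of $k$ and a part coprime to $k$, bound the original sum above by the product of the two resulting sums using $g\ge 0$, evaluate the divisor sum as $\prod_{p\mid k}(1+g(p))$, and rearrange. You also correctly read the condition in the lemma's statement as $\gcd(n,k)=1$ (the printed $\gcd(n,t)=1$ is a typo), and spelling out the intermediate step with $b\le t/a$ relaxed to $b\le t$ is a small but genuine clarification over the paper's one-line version.
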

\begin{proof} We can factor each squarefree $n\le t$ in the form $n=n_1 n_2$, where $n_1 \mid k$ and $n_2$ is coprime to $k$. Then
\begin{align*} \sum_{n \le t} \mu^2(n) g(n) &\le   \bigg(\sum_{n_1 \mid k} \mu^2(n_1) g(n_1)\bigg) \bigg(\sum_{\substack{n_2 \le t \\ \gcd(n_2,k)=1}} \mu^2(n_2) g(n_2)\bigg) \\&= \bigg(\prod_{p \mid k}(1+g(p))\bigg)\bigg(\sum_{\substack{n_2 \le t \\ \gcd(n_2,k)=1}} \mu^2(n_2) g(n_2)\bigg). \end{align*}
Rearranging yields the result.
\end{proof}

Applying Lemma \ref{lem:trivlem} with $g(n) = \phi(n)/n^2$ and $k=q$,
\begin{align*} \sum_{\substack{2 < d \le Z \\ \gcd(d,q)=1}} \frac{\phi(d)}{d^2} &\ge \sum_{\substack{2 < d \le Z \\ \gcd(d,q)=1}} \mu^2(d) \frac{\phi(d)}{d^2}\\ &\ge \frac{1}{2} \sum_{2 < d \le Z}  \mu^2(d) \frac{\phi(d)}{d^2}. \end{align*}
The multiplicative function $d\mapsto \mu^2(d)\frac{\phi(d)}{d}$ has a well-defined nonzero mean value (for instance, by an elementary theorem of Wintner \cite[Corollary 2.3, p.\ 51]{SS94}). By partial summation, the final displayed sum on $d$ is $\gg \log(Z) \gg \log{x}$, as desired. Inserting this back into \eqref{eq:almostdone2} completes the proof.

\begin{remark} There is no essential difficulty in extending the upper bound half of Theorem \ref{thm:main} to elliptic curves defined over an arbitrary number field $L$. In that case, the sum on $p \le x$ should be replaced with a sum over prime ideals of norm bounded by $x$, and the implied constant may now depend on $L$. We have not yet obtained a corresponding generalization of the lower bound; the obstruction is that we do not know an appropriate analogue of Lemma \ref{lem:murty}.
\end{remark}

\section*{Acknowledgements} The second author would like to express his continuing gratitude to Pete L. Clark for helpful conversations on the theory of elliptic curves. He is supported by NSF award DMS-1402268.

\bibliographystyle{amsalpha}

\providecommand{\bysame}{\leavevmode\hbox to3em{\hrulefill}\thinspace}
\providecommand{\MR}{\relax\ifhmode\unskip\space\fi MR }
\providecommand{\MRhref}[2]{%
  \href{http://www.ams.org/mathscinet-getitem?mr=#1}{#2}
}
\providecommand{\href}[2]{#2}

\end{document}